\title{On the profinite rigidity of triangle groups}
\author{M. R. Bridson}
\author{D. B. McReynolds}
\author{A. W. Reid}
\author{R. Spitler}
\address{\newline Mathematical Institute, 
\newline Andrew Wiles Building,
\newline University of Oxford,
\newline Oxford OX2 6GG, UK}
\email{ bridson@maths.ox.ac.uk}
\address{\newline Department of Mathematics,
\newline Purdue University
\newline West Lafayette, IN 47907, USA}
\email{dmcreyno@purdue.edu}
\address{\newline Department of Mathematics,
\newline Rice University, 
\newline Houston, TX 77005, USA}
\email{ alan.reid@rice.edu}
\address{\newline Department of Mathematics,
\newline McMaster University
\newline Hamilton, Ontario, Canada}
\email{spitlerr@mcmaster.ca}
\def\-{\overline}
\def\wh{\widehat}
\def\G{\Gamma}
\def\H{\mathbb{H}}
\def\Z{\mathbb{Z}}
\def\R{\mathbb{R}}
\def\Q{\mathbb{Q}} 
\def\F{\mathbb{F}} 
\def\C{\mathbb{C}}
\def\tr{\mbox{\rm{tr}}\, }
\def\P{\mbox{\rm{P}}}
\def\O{\mbox{\rm{O}}}
\def\G{\Gamma}
\def\D{\Delta}
\def\<{\langle}
\def\>{\rangle}
\def\ilim{\varprojlim}
\def\Hom{{\rm{Hom}}}
\def\onto{\twoheadrightarrow}
\newcommand{\innp}[1]{\left< #1 \right>}
\newcommand{\abs}[1]{\left\vert#1\right\vert}
\newcommand{\set}[1]{\left\{#1\right\}}
\DeclareMathOperator{\SL}{SL} \DeclareMathOperator{\PSL}{PSL}
\newtheorem{theorem}{Theorem}[section]
\newtheorem{lemma}[theorem]{Lemma}
\newtheorem{corollary}[theorem]{Corollary}
\newtheorem{proposition}[theorem]{Proposition}
\theoremstyle{definition} 
\newtheorem{definition}[theorem]{Definition}
\newtheorem{remark}[theorem]{Remark}
\begin{document}


\begin{abstract}  
We prove that certain Fuchsian triangle groups are profinitely rigid in the absolute sense, i.e.~each is distinguished from all other finitely generated, residually finite groups by its set of finite quotients. We also develop a method based on character varieties that can be used to distinguish between the profinite completions of certain groups.
\end{abstract}

\keywords{Profinite rigidity, arithmetic group, triangle group, character variety}
\maketitle

\centerline{\em{For Bill Harvey, who knew Fuchsian groups well}}

%
%
\section{Introduction}\label{intro} 

A finitely generated, residually finite group $\G$ is {\em profinitely rigid (in the absolute sense)} if it is distinguished from all other finitely generated, residually finite groups by its set of finite quotients. More formally, if $\Lambda$ is finitely generated and residually finite, then $\wh{\Lambda}\cong \wh{\G}$ implies ${\Lambda}\cong {\G}$ (where $\wh{\Delta}$ denotes the profinite completion of a group $\Delta$). Finitely generated abelian groups have this property, as do certain nilpotent groups, but it is hard to construct examples of profinitely rigid groups that do not satisfy a group law; indeed no such groups were known until our work in \cite{BMRS}. The most compelling question in the field is the conjecture that non-abelian free groups of finite rank are profinitely rigid. More generally, it seems reasonable to expect all lattices in $\PSL(2,\mathbb{R})$ to be profinitely rigid in this sense. The main result of \cite{BCR} shows that such lattices can at least be distinguished from each other by their finite quotients. 

In \cite{BMRS} we proved that certain arithmetic lattices in $\PSL(2,\mathbb{C})$ are profinitely rigid, including the Bianchi group $\PSL(2,\Z[\omega])$ (where $\omega^2+\omega+1=0$) and the fundamental group of the Weeks manifold, which is the closed hyperbolic 3--manifold of minimal volume. The main purpose of the present article is to prove that certain arithmetic lattices in $\PSL(2,\mathbb{R})$ are also profinitely rigid in the absolute sense.  

\begin{restatable}{thm}{main}\label{main}
The following arithmetic triangle groups are profinitely rigid in the absolute sense:
\begin{align*}
&\Delta(3,3,4), \Delta(3,3,5),\ \Delta(3,3,6),\ \Delta(2,5,5), \Delta(3,5,5), 
 \Delta(4,4,4), \Delta(5,5,5),\\
&\Delta(2,3,8), \Delta(2,3,10), \Delta(2,3,12), \Delta(2,4,5), \Delta(2,5,6), 
\Delta(2,4,8), \Delta(2,5,10). 
\end{align*}
\end{restatable}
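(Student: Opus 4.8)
The plan is to show that any finitely generated, residually finite $\Lambda$ with $\wh{\Lambda}\cong\wh{\Delta}$ admits an epimorphism onto $\Delta$, and then to upgrade this to an isomorphism. The upgrade is the soft part: an epimorphism $f\colon\Lambda\epi\Delta$ induces an epimorphism $\wh f\colon\wh\Lambda\to\wh\Delta$, and composing with a fixed isomorphism $\wh\Delta\xrightarrow{\ \sim\ }\wh\Lambda$ produces a surjective continuous endomorphism of the topologically finitely generated profinite group $\wh\Lambda$, which is therefore an automorphism (finitely generated profinite groups are Hopfian). Hence $\wh f$ is injective, so every element of $\ker f$ maps trivially into $\wh\Lambda$; since $\Lambda$ is residually finite, $\ker f=1$ and $\Lambda\cong\Delta$. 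So everything reduces to constructing the epimorphism.

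To build it I would use the arithmetic structure of $\Delta=\Delta(p,q,r)$: it is commensurable with the image of a maximal order in a quaternion algebra $B$ over a totally real field $k$, and the defining inclusion $\rho_0\colon\Delta\hookrightarrow\PSL(2,\R)\subset\PSL(2,\C)$ is a Zariski-dense representation that is cohomologically rigid, $H^1(\Delta;\Ad\,\rho_0)=0$. (Rigidity is automatic here: since the standard generators of a triangle group have prescribed finite orders, the relevant component of the $\PSL(2,\C)$-character variety of $\Delta$ is a reduced, isolated point, defined over a number field.) The congruence quotients attached to $\rho_0$ — essentially groups of the form $\PSL(2,\mathcal O_k/\mathfrak n)$ together with their natural quotient structure — are finite quotients of $\Delta$, hence of $\Lambda$, and the character-variety method developed in this paper allows one to assemble these compatible finite quotients, using rigidity of the distinguished character, into a Zariski-dense representation $\rho\colon\Lambda\to\PSL(2,\C)$ whose character agrees with that of $\rho_0$. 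One must first exclude reducible and dihedral images, which is forced because the mod-$\mathfrak p$ reductions are all of $\PSL(2,\mathbb F_p)$ for suitable $\mathfrak p$; and goodness of Fuchsian groups (in Serre's sense) is used to compare cohomology and Euler characteristics across the profinite isomorphism.

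With $\rho$ in hand I would identify its image. The traces of $\rho(\Lambda)$ lie in the totally real field $k$ and are algebraic integers, so after conjugation $\rho(\Lambda)$ is a discrete subgroup of $\PSL(2,\R)$, i.e.\ a Fuchsian group, commensurable with $\Delta$ and with $\wh{\rho(\Lambda)}$ a quotient of $\wh\Lambda\cong\wh\Delta$. For each triangle group on the list the arithmetic is controlled enough — small class number, and $\Delta$ is a maximal arithmetic Fuchsian group in its commensurability class by the classifications of Takeuchi and Borel — that these constraints, together with the orders of the torsion elements of $\Lambda$ (which are read off from $\wh\Lambda$) and the computation $\Lambda^{\ab}=\Delta^{\ab}$, pin the image down to be exactly $\Delta$. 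This produces the epimorphism $\Lambda\epi\Delta$, and the first paragraph then finishes the argument.

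The main obstacle is the middle step: converting the combinatorial datum ``$\Lambda$ has the same finite quotients as $\Delta$'' into the geometric datum ``$\Lambda$ carries a Zariski-dense representation to $\PSL(2,\C)$ with the right character.'' This is exactly where Galois-cohomological rigidity of $\rho_0$ is indispensable and where the character-variety machinery does the work; it is also the point at which the specific entries of Takeuchi's list enter, since one needs the arithmetic of $B/k$ to be clean enough both to run the detection argument and to rule out afterwards that a proper commensurable Fuchsian group carries the representation. The handful of exceptional triples — the $(2,3,\ast)$ and other $(2,\ast,\ast)$ cases as against the $(3,3,\ast)$ and equilateral cases — are likely to need case-specific treatment of the quaternion algebra and the torsion.
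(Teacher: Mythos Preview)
Your overall architecture---produce a Zariski-dense representation $\rho\colon\Lambda\to\PSL(2,\C)$ using the rigidity of the canonical character, identify its image with $\Delta$, then use the Hopfian property of $\wh\Lambda$ to conclude that $\rho$ is an isomorphism---matches the paper exactly, and your first paragraph is precisely how the endgame goes. The differences, and the genuine gaps, are in the two middle steps.

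First, the image identification. You propose to argue that $\rho(\Lambda)$ is a Fuchsian group \emph{commensurable} with $\Delta$, and then to invoke maximality of $\Delta$, class numbers, torsion, and abelianisations to pin the image down. The paper's argument is both simpler and tighter: for the first five groups the quaternion algebra is over a \emph{real quadratic} field (not a general totally real field) with type number $1$ and a single finite ramified place, and the result extracted from \cite{BMRS} (Theorem~\ref{galois_rigid_real_quad}) already places $\rho(\Lambda)$ \emph{inside} the specific maximal order, i.e.\ inside $\Delta$ itself. One is then left to exclude proper non-elementary subgroups $L<\Delta$; the paper does this with an Euler-characteristic argument (Lemma~\ref{l:euler} and Corollary~\ref{c:whole}): a continuous epimorphism $\wh\Delta\to\wh L$ forces $\chi(\Delta)\le\chi(L)$, which is impossible for finite-index proper $L$, and the infinite-index case is ruled out by an elementary analysis of free products of cyclics. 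Your maximality/commensurability route is not obviously wrong, but Remark~\ref{r:237} in the paper shows why it is delicate: for $\Delta(2,3,7)$ (cubic trace field) one cannot currently force the image into $\Delta$ rather than into $\PSL(2,R_k)$, which is exactly the ambiguity your sketch would have to resolve. The real quadratic hypothesis is doing real work.

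Second, the last seven groups in the list are not handled by the same argument at all. They are not themselves of the form $\Gamma_{\mathcal O}^1$ for the relevant quaternion data; rather, each contains one of the first seven as a subgroup of index $2$. The paper proves separately (Proposition~\ref{prop_index2}, Lemma~\ref{nonisoprofinite}, Corollary~\ref{extending_profinite}) that absolute profinite rigidity passes from $\Delta(p,q,r)$ to any index-$2$ overgroup, by classifying the possible index-$2$ extensions and checking that their profinite completions are pairwise distinct. Similarly $\Delta(4,4,4)$ and $\Delta(5,5,5)$ are handled as index-$3$ normal subgroups of $\Delta(3,3,4)$ and $\Delta(3,3,5)$, with a short abelianisation check to force $L\le\Delta$. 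Your proposal treats all fourteen groups uniformly, which will not go through as written; you should anticipate this trichotomy.
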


We follow the convention of writing $\Delta(p,q,r)$ to denote the orientation-preserving subgroup of index $2$ in the Coxeter group generated by reflections in the sides of a hyperbolic triangle with vertex angles $\pi/p$, $\pi/q$, and $\pi/r$. We shall also prove (Corollary \ref{c:non-orient}) that for the above values of $(p,q,r)$, these Coxeter groups are profinitely rigid in the absolute sense.

In its broad outline, our strategy for proving Theorem \ref{main} is the one employed in \cite{BMRS} to establish the existence of Kleinian groups that are profinitely rigid. Several of the key ideas developed in \cite{BMRS}, notably that of {\em Galois rigidity}, will play a crucial role again here. But the end-game by which we move from the general construction of representations to profinite rigidity for specific examples is more direct in the present setting than it was in
\cite{BMRS}. 

Roughly speaking, a finitely generated subgroup $\G<{\rm{PSL}}(2,\C)$ is Galois rigid if all of the irreducible representations $\G\to {\rm{PSL}}(2,\C)$ can be constructed from the arithmetic of the trace field of $\G$ (see \S \ref{s:galois_rigid}). The first five groups in Theorem \ref{main} arise as the image in $\PSL (2,\C)$ of the elements of norm $1$ in a maximal order in a quaternion algebra over a real quadratic field, ramified at one infinite place and one finite place; see Theorem \ref{arithtriangle}. Each of these groups is Galois rigid; see Proposition \ref{trianglereps}. Both of $\Delta(4,4,4)$, and $\Delta(5,5,5)$ arise as a subgroup of index $3$ in one of the previous groups; these are also Galois rigid. The other seven groups in Theorem \ref{arithtriangle} are index $2$ extensions of the groups written above them.

In the proof of Theorem \ref{main}, we use classical properties of Fuchsian groups to prove that each proper infinite  subgroup of the triangle groups $\Delta$ has a finite quotient that $\Delta$ does not have (see Corollary \ref{c:whole}). This can also be deduced from the more widely applicable criterion that we develop in Section 5 using the theory of character varieties. In addition, in \S \ref{s:character} we present a more general method using the structure of character varieties to produce homomorphisms to subgroups of ${\rm{PSL}}(2,\mathbb{F})$ (for a finite field $\F$) that can also be used to distinguish between the profinite completions of the groups that interest us. Recall that a finitely generated group $\G$ has Property FA if it has a global fixed point whenever it acts on a simplicial tree.

\begin{restatable}{thm}{charvar}\label{t:character-var}
Let $\G < \PSL(2,\C)$ be a finitely generated Kleinian group that has Property FA. Suppose that $H$ is a finitely generated non-elementary Kleinian group whose  $\PSL(2,\C)$--character variety has an irreducible component of positive dimension containing the character of a discrete faithful representation of $H$. Then there exists a finite field $\F$ and a representation $H\to\PSL(2,\F)$ whose image is not a quotient group of $\G$.
\end{restatable}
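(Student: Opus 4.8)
The plan is to use the positive-dimensional component of the character variety of $H$ to produce, in one fixed residue characteristic $p$, infinitely many representations of $H$ onto finite almost-simple groups of unbounded order inside $\PSL(2,\overline{\F}_{p})$, and then to use Property FA for $\G$, via Culler--Shalen theory, to show that only finitely many of these groups are quotients of $\G$. Comparing the two families then yields the required representation.

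For the construction, first fix $\gamma_{0}\in H$ whose trace function $\operatorname{tr}_{\gamma_{0}}$ is non-constant on the given component $X_{0}$ of the character variety of $H$; this is possible because $\dim X_{0}\ge 1$ and the coordinate ring of a character variety is generated by trace functions. Since the character variety is defined over $\Q$, the component $X_{0}$ is defined over a number field $k$. The open locus $U\subseteq X_{0}$ of absolutely irreducible, non-dihedral, non-elementary characters is non-empty --- it contains the character of a discrete faithful representation of $H$, whose image, being non-elementary, is conjugate neither into a Borel subgroup nor into the normaliser of a torus of $\PGL_{2}(\C)$ --- hence dense. Choose a curve $D\subseteq X_{0}$, defined over $k$ (after possibly enlarging $k$), that meets $U$ and on which $\operatorname{tr}_{\gamma_{0}}$ is non-constant, and spread it out to a flat affine curve $\mathcal{D}$ over a ring of $S$-integers $\mathcal{O}_{k}[1/N]$ with geometrically integral fibres. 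Enlarging $N$ (and passing to a finite cover of $\mathcal{D}$ if necessary to obtain a tautological representation), I may arrange that $\mathcal{D}$ carries a tautological representation $\rho\colon H\to\SL(2,\mathcal{O}(\mathcal{D}))$, that in every geometric fibre $\mathcal{D}_{\overline{\F}_{p}}$ the locus where $\bar\rho$ is absolutely irreducible, non-dihedral and non-elementary is dense open, and that the reduction of $\operatorname{tr}_{\gamma_{0}}$ is non-constant on $\mathcal{D}_{\overline{\F}_{p}}$. Now fix a prime $p$ enjoying these good-reduction properties which in addition avoids the finitely many primes supporting ``vertical'' components of a natural $\Z$-model $\mathcal{X}$ of the $\PGL_{2}$-character scheme of $\G$. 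For each closed point $y$ of the curve $\mathcal{D}_{\F_{p}}$, reducing $\rho$ at $y$ gives a representation $\bar\rho_{y}\colon H\to\SL(2,\F_{p^{\deg y}})$; let $G_{y}\le\PSL(2,\F_{p^{\deg y}})$ be the image of $H$ in $\PSL_{2}$.

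The second step is to show that $G_{y}$ is large for all but finitely many $y$. For all but finitely many $y$ the representation $\bar\rho_{y}$ is absolutely irreducible and non-dihedral, by the density statement above. Moreover, for each constant $C$ only finitely many $y$ have $|G_{y}|\le C$: otherwise a Zariski-dense set of closed points of the irreducible curve $\mathcal{D}_{\F_{p}}$ would factor $\bar\rho_{y}$ through one finite quotient $Q=H/N$ of $H$, and, since the trace of a fixed element of $Q$ over all two-dimensional representations of the finite group $Q$ takes only finitely many values, the reduction modulo $p$ of each trace function $\operatorname{tr}_{\gamma}\circ\rho$ would take only finitely many values on a dense subset of $\mathcal{D}_{\F_{p}}$, hence be constant --- contradicting the non-constancy of the reduction of $\operatorname{tr}_{\gamma_{0}}\circ\rho$. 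Thus $|G_{y}|\to\infty$, and Dickson's classification of the subgroups of $\PSL(2,q)$ then forces, for all but finitely many $y$, that $G_{y}$ is isomorphic to $\PSL(2,p^{f(y)})$ or $\PGL(2,p^{f(y)})$ with $f(y)\to\infty$.

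Finally, Property FA for $\G$ is invoked. Since $\G$ fixes a point of every tree on which it acts, Culler--Shalen theory --- an ideal point of a curve in the character variety would produce an action of $\G$ on its Bruhat--Tits tree with a hyperbolic element --- shows that the $\PGL(2,\C)$-character variety of $\G$ is $0$-dimensional. Hence, in $\mathcal{X}$, every irreducible component either is supported over a single prime (finitely many such) or dominates $\operatorname{Spec}\Z$ with finite generic fibre, hence is finite over $\Z$ away from finitely many primes; so the geometric fibre $\mathcal{X}_{\overline{\F}_{p}}$ is finite for the chosen $p$. Therefore $\G$ has only finitely many characters into $\PGL(2,\overline{\F}_{p})$, and since $\PSL(2,\overline{\F}_{p})=\PGL(2,\overline{\F}_{p})$, only finitely many absolutely irreducible subgroups of $\PSL(2,\overline{\F}_{p})$ arise, up to conjugacy, as images of homomorphisms from $\G$. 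As $f(y)\to\infty$, the groups $G_{y}$ contain infinitely many pairwise non-conjugate such subgroups, so some $G_{y}$ is not a quotient of $\G$; the corresponding homomorphism $H\to\PSL(2,\F_{p^{\deg y}})$ is the representation sought. I expect the most delicate points to be the mixed-characteristic bookkeeping of the construction --- securing good reduction of $\mathcal{D}$, the persistence of a non-constant trace function in a usable characteristic, and the existence of the tautological representation --- and the ``Dickson dichotomy'' specialisation argument of the second step; the appeal to Culler--Shalen for $\G$, though it is exactly where Property FA does its work, is classical.
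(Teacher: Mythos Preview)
Your proposal is correct and follows the same overall architecture as the paper's proof: Property~FA forces $\G$ to have only finitely many irreducible representations into $\PSL(2,\overline{\F}_p)$ for a suitable prime $p$, while the positive-dimensional component of $Y(H)$ yields infinitely many, so some image of $H$ cannot be a quotient of $\G$. The technical implementations differ in instructive ways. For the $\G$ side, the paper invokes Bass's theorem directly---Property~FA implies integral $2$-representation type, hence finitely many irreducible representations into $\PSL(2,\overline{\F}_p)$ for \emph{every} prime $p$---whereas you rederive the zero-dimensionality over $\C$ via Culler--Shalen and then spread out over $\Z$, obtaining finiteness only at good primes (which suffices). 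For the $H$ side, the paper works with a single generic point of $Y$: the trace ring $R$ has positive transcendence degree over $\Q$, and Noether normalisation shows that $\Hom(R,\overline{\F}_p)$ is infinite for almost all $p$, with distinct ring homomorphisms giving distinct irreducible characters. Your spreading-out of a curve $\mathcal{D}$ with a tautological representation is the geometric reformulation of the same idea, but it introduces the extra burden (which you flag) of securing the tautological family; the paper sidesteps this by never leaving the trace ring. Your additional Dickson step, identifying the images as $\PSL(2,p^{f(y)})$ or $\PGL(2,p^{f(y)})$ with $f(y)\to\infty$, is not needed for the theorem as stated and does not appear in the paper's proof, though the paper's closing remark gestures toward exactly this refinement via Strong Approximation.
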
 

\noindent{\bf{Acknowledgements:}} We acknowledge with gratitude the financial support of the Royal Society (MRB) and the National Science Foundation (DBM and AWR). In addition, we thank the Hausdorff Institute in Bonn, ICMAT in Madrid and the University of Auckland for their hospitality during the writing of this article.
 
\section{Trace-fields and Galois Rigidity}\label{s:BMRS}

\noindent We recall some of the key ideas from  \cite{BMRS}.

\subsection{Trace-fields}\label{ss:traces}

To fix notation, it will be convenient to record some basic facts about trace-fields of finitely generated subgroups of $\PSL(2,\C)$.  Let $\phi\colon \mathrm{SL}(2,\mathbb{C}) \to \mathrm{PSL}(2,\mathbb{C})$ be the quotient homomorphism, and $H$ a finitely generated subgroup of $\PSL(2,\mathbb{C})$. Let $H_1 = \phi^{-1}(H)$. It will be convenient to say $H$ is {\em Zariski dense} in $\PSL(2,\C)$ when what we actually mean is that $H_1$ is a Zariski dense subgroup of $\SL(2,\C)$. The \textit{trace-field} of $H$ is defined to be the field 
\[ K_H=\mathbb{Q}(\mathrm{tr}(\gamma)~\colon~ \gamma \in H_1). \] 
If $K_H$ is a number field with ring of integers $R_{K_H}$, we say that $H$ has {\em integral traces} if $\tr(\gamma)\in R_{K_H}$ for all $\gamma \in H_1$. The group $H_1$ generates a $K_H$--quaternion algebra $A_0H$, and when $H$ has integral traces, $H_1$ generates an $R_{K_H}$--order $\mathcal{O}H$ in $A_0H$ (see \cite[Ch 3]{MR} for more details on this material). Conversely, if $H_1$ is contained in an order of $A_0H$, then $H$ has integral traces.  

One well-known situation when $K_H$ is a number field is the following (see \cite{BMRS} for example). Let $\mathrm{X}_{\mathrm{zar}}(H,\C)$ denote the set of Zariski dense representations up to conjugacy of $H$ in $(\P)\SL(2,\C)$.

\begin{lemma} \label{trace_field_number_field}
Let $H< \PSL(2,\C)$ be a finitely generated group. If $\mathrm{X}_{\mathrm{zar}}(H,\C)$ is finite, then $K_H$  is a number field.
\end{lemma}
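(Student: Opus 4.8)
The plan is to exploit the standard dictionary between trace fields and character varieties. Fix a finite generating set $\gamma_1,\dots,\gamma_n$ for $H_1=\phi^{-1}(H)$ (finite since $H$ is finitely generated). The representation variety $R(H_1,\SL(2,\C))\subset \SL(2,\C)^n$ is an affine algebraic set defined over $\Q$, and the functions $\tau_w:R(H_1,\SL(2,\C))\to\C$, $\rho\mapsto \tr(\rho(w))$, for $w$ a word in the generators, are regular functions defined over $\Q$. A classical fact (Vinberg; see also \cite{MR}) is that the ring of these trace functions is finitely generated: there is a finite set of words $w_1,\dots,w_m$ such that every $\tau_w$ is a polynomial with $\Z$--coefficients in $\tau_{w_1},\dots,\tau_{w_m}$. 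Hence the character map $t:R(H_1,\SL(2,\C))\to\C^m$, $\rho\mapsto(\tau_{w_1}(\rho),\dots,\tau_{w_m}(\rho))$, is a morphism of $\Q$--varieties whose image, the character variety $\mathrm{X}(H_1,\C)$, is a $\Q$--closed subset of $\C^m$, and the trace field $K_H$ is generated over $\Q$ by the coordinates of the point $t(\rho_0)$, where $\rho_0$ is the inclusion $H_1\hookrightarrow\SL(2,\C)$.

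The key step is to observe that $t(\rho_0)$ lies in the subvariety of $\mathrm{X}(H_1,\C)$ cut out by the Zariski-dense characters, and that this subvariety is finite by hypothesis. More precisely, a character in $\mathrm{X}(H_1,\C)$ is reducible if and only if it satisfies the finitely many polynomial equations $\tau_{w_iw_j}-\tau_{w_i}\tau_{w_j}+\tau_{w_iw_j^{-1}}-2=0$ (or rather, the vanishing conditions coming from reducibility), so the reducible characters form a $\Q$--closed subset; its complement is the Zariski-open set of irreducible characters, each of which corresponds to a unique conjugacy class of irreducible representation, and among these the Zariski-dense ones again form a $\Q$--constructible set. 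By hypothesis $\mathrm{X}_{\mathrm{zar}}(H,\C)$ is finite, so the character $\chi_{\rho_0}=t(\rho_0)$ is an isolated point of a $\Q$--constructible, hence $\Q$--closed, finite set $S\subset\C^m$. A finite set defined over $\Q$ consists of points all of whose coordinates are algebraic over $\Q$: indeed $S$ is the zero locus of a $\Q$--ideal $I$ with $\C^m/I$ finite-dimensional over $\C$, so also $\-{\Q}^m/(I\cap\-{\Q}[x])$ is finite-dimensional, and the Nullstellensatz forces every point of $S$ to have coordinates in $\-{\Q}$. Therefore each coordinate of $t(\rho_0)$ is an algebraic number, and $K_H$, being generated over $\Q$ by finitely many algebraic numbers, is a number field.

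The step I expect to be the main technical obstacle is the passage from ``$\mathrm{X}_{\mathrm{zar}}(H,\C)$ is finite'' to ``$\chi_{\rho_0}$ has algebraic coordinates''; one must be careful that finiteness of the \emph{set} of Zariski-dense characters really does pin down the ambient field, which requires knowing that this set is cut out over $\Q$ (not merely over $\C$) and that a $0$--dimensional $\Q$--variety has only algebraic points. The first point follows because reducibility and non-density are detected by vanishing of $\Z$--polynomials in the trace coordinates, and the second is elementary commutative algebra (a finite $\C$--algebra that is a quotient of a $\Q$--algebra of finite type has its finitely many maximal ideals sitting over $\-\Q$). With these in hand the argument is short. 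One mild subtlety worth flagging: the statement conflates $\SL$ and $\PSL$ characters, but since $K_H$ is defined via traces of elements of $H_1\subset\SL(2,\C)$ this causes no difficulty — we work throughout with $H_1$ and its $\SL(2,\C)$--character variety.
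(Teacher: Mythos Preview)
The paper does not give its own proof of this lemma; it merely records the statement with a parenthetical pointer to \cite{BMRS}. So there is nothing in the present paper to compare your argument against line by line. That said, your approach is exactly the standard one that underlies the cited result: realise the trace field as the residue field of the point $\chi_{\rho_0}$ on the $\SL(2)$--character variety, observe that the character variety and the locus of Zariski-dense characters are defined over $\Q$, and use finiteness of that locus to force the coordinates of $\chi_{\rho_0}$ to be algebraic. This is correct and is essentially how the result is obtained in \cite{BMRS}.

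Two small points are worth tightening. First, the explicit polynomial you wrote for reducibility, $\tau_{w_iw_j}-\tau_{w_i}\tau_{w_j}+\tau_{w_iw_j^{-1}}-2$, is identically $-2$ by the $\SL_2$ trace identity $\tr(AB)+\tr(AB^{-1})=\tr(A)\tr(B)$, so it does not cut out anything; you already flag this with your hedge, and the correct condition (e.g.\ $\tr([g,h])=2$ for all $g,h$) is indeed a $\Z$--polynomial condition on trace coordinates. Second, your argument uses that the inclusion character $\chi_{\rho_0}$ lies in $X_{\mathrm{zar}}$, i.e.\ that $H$ itself is Zariski dense in $\PSL(2,\C)$. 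This hypothesis is implicit throughout \S\ref{ss:traces} of the paper (e.g.\ the claim that $H_1$ generates a quaternion algebra already requires it) but is not stated in the lemma; without it the conclusion can fail, as the example of a cyclic group generated by a diagonal matrix with transcendental eigenvalues shows. It would be cleanest to make this assumption explicit in your write-up.
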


Suppose that $H$ is a finitely generated group and  $\rho\colon H\rightarrow (\P)\SL(2,\C)$ a Zariski dense representation with $K=K_{\rho(H)}$ a number field of degree $n_K$. If $K=\Q(\theta)$ for some algebraic number $\theta$, then the Galois conjugates of $\theta$, say $\theta=\theta_1,\dots,\theta_{n_K}$ provide embeddings $\sigma_i\colon K\to\C$ defined by $\theta\mapsto\theta_i$.  These in turn can be used to build $n_K$ Zariski dense non-conjugate representations $\rho_{\sigma_i}\colon H \to (\P)\SL(2,\C)$ with the property that $\tr(\rho_{\sigma_i}(\gamma))=\sigma_i(\tr\rho(\gamma))$ for all $\gamma\in H$. We sometimes refer to these as {\em Galois conjugate representations}.

\subsection{Galois rigidity}\label{s:galois_rigid}

As in \cite{BMRS} we will be interested in groups $\G$ with the fewest possible Zariski dense representations. To be more precise, recall from the discussion in \S \ref{ss:traces} that if $\G$ is a finitely generated group and $\rho\colon \G\to (\P)\SL(2,\C)$ a Zariski dense representation whose trace field $K_{\rho(\G)}$ is a number field, then using the Galois conjugate representations, we have $\abs{\mathrm{X}_{\mathrm{zar}}(\G,\mathbb{C})}\geq  n_{K_{\rho(\G)}}$.  

\begin{definition}[Galois rigid]\label{def:galois-rigid}
Let $\G$ be a finitely generated group and let $\rho\colon \G\to (\P)\!\SL(2,\C)$ be a Zariski dense representation whose trace field $K_\G$ is a number field. If $\abs{\mathrm{X}_{\mathrm{zar}}(\G,\mathbb{C})}= n_{K_{\rho(\G)}}$, we say that $\G$ is {\em Galois rigid}  (with associated field $K_{\rho(\G)}$).
\end{definition}

When we say that a subgroup of $(\P)\!\SL(2,\C)$ is Galois rigid, we are implicitly taking $\rho$ to be the inclusion map. In this case $\G$ is Galois rigid if and only if $\rho(\G)$ is Galois rigid. Note too that if $\G<(\P)\!\SL(2,\C)$ is Galois rigid, then any irreducible representation with infinite image can serve as $\rho$, as any such representation is a Galois conjugate of any other. In particular, $K_\G$, considered as an abstract field, is an intrinsic invariant of $\G$, as is the associated quaternion algebra $A_0\G$.

The theorem stated below can be extracted from \cite[Thm 4.7, Cor 4.10]{BMRS}, but the special case that we require involves some preliminaries. To that end, we fix a real quadratic number field $K\subset \R$ with $\sigma\colon K\to \R$ the non-trivial Galois embedding, a quaternion algebra $B/K$, and a maximal order $\mathcal{O} < B$. Since $K$ has two real places $v_1$ (the identity place) and $v_2$ (associated to $\sigma$), we can prescribe that $B$ be ramified at either of $v_1$ or $v_2$, and unramified at the other; denote these two possibilities by $B_1$ and $B_2$ respectively. If these $B_i$ are only additionally ramified at a finite place $\omega$ with residue field of characteristic $p$ where $\omega$ is the unique such place, then although $B_1$ and $B_2$ are not isomorphic (over $K$), there is an extension of $\sigma$ that maps $B_1$ to $B_2$. In this situation, and up to this ambiguity, we can identify $B$ with either of the $B_i$ (for $i=1,2$) and assume that $\Gamma < \mathcal{O}^1$ is a finitely generated subgroup such that $K_\G= K$. We identify $A_0\G$ with $B$.  With this preamble established, the following result can readily be extracted from  \cite[Thm 4.7, Cor 4.10]{BMRS}.

\begin{theorem}\label{galois_rigid_real_quad}
Let $\Gamma<\mathcal{O}^1<B$ be as above and assume that $\G$ is Galois rigid. If $\Delta$ is a  finitely generated, residually finite group with $\widehat{\Delta} \cong \widehat{\Gamma}$, then
\begin{itemize}
\item[(i)]
$\Delta$ is Galois rigid with associated field $K$.
\item[(ii)] 
If $B$ has type number $1$, and if $\mathrm{Ram}(B) = \set{v_2,\omega}$ where $v_2$ is the real place described above, and $\omega$ is a finite place as above, then there is a Zariski dense homomorphism $\phi'\colon \Delta \to \mathcal{O}^1$.
\end{itemize}
\end{theorem}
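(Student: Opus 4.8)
The plan is to deduce both statements from \cite[Thm~4.7, Cor~4.10]{BMRS}; the substance of the argument lies in checking that the real quadratic situation of the preamble is a special case of the general framework of \cite{BMRS}, and in keeping careful track of the Galois ambiguity between $B_1$ and $B_2$.

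First I would recall, in outline, the mechanism behind \cite[Thm~4.7]{BMRS}. Since $\G$ is Galois rigid with number field $K$ and has integral traces (it lies in $\mathcal{O}^1$), the inclusion $\G<\mathcal{O}^1$ together with its Galois conjugate reduce modulo primes of $R_K$ to yield finite quotients of $\G$ of the form $\PSL(2,\F_q)$, whose pattern --- which fields $\F_q$ occur, and with what multiplicities --- records both the decomposition of rational primes in $K$ and the finite ramification set of $B$. As $\wh{\Delta}\cong\wh{\G}$, the group $\Delta$ has exactly the same finite quotients, and the character-variety argument of \cite{BMRS} (compare \S\ref{s:character}) promotes this data to a Zariski dense representation $\rho_\Delta\colon\Delta\to\SL(2,\C)$ whose trace field is a number field isomorphic to $K$; moreover $\rho_\Delta$ has integral traces, and its associated $K$--quaternion algebra is abstractly isomorphic to $B$, hence equal over $K$ to $B_1$ or to $B_2$. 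The same bookkeeping with finite quotients forces $\abs{\mathrm{X}_{\mathrm{zar}}(\Delta,\C)}\le n_K$, and since the Galois conjugates of $\rho_\Delta$ give the reverse inequality, $\abs{\mathrm{X}_{\mathrm{zar}}(\Delta,\C)}= n_K$; this is part~(i).

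For part~(ii) I would then argue as follows. Assume $B$ has type number $1$ and $\mathrm{Ram}(B)=\set{v_2,\omega}$. By part~(i), $\Delta$ has, up to conjugacy, exactly two Zariski dense representations, Galois conjugate to one another, with $K$--quaternion algebras $B_1$ and $B_2=B$; replacing $\rho_\Delta$ by its Galois conjugate if necessary, I may assume the algebra of $\rho_\Delta$ is $B$, so that $\rho_\Delta(\Delta)$ consists of norm-one elements generating $B$. Since $\rho_\Delta$ has integral traces, $\rho_\Delta(\Delta)$ lies in an order of $B$, and hence in a maximal order $\mathcal{O}'$; because $\omega$ is the only finite ramified place and $B$ has type number $1$, every maximal order of $B$ is $B^{\times}$--conjugate to $\mathcal{O}$, so after conjugating $\rho_\Delta$ within $B$ we may take $\mathcal{O}'=\mathcal{O}$. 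Then $\phi'=\rho_\Delta$ is the desired Zariski dense homomorphism $\Delta\to\mathcal{O}^1$.

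The main obstacle --- and the reason part~(ii) is stated with the real place $v_2$ singled out --- is precisely the Galois ambiguity noted in the preamble: the profinite completion cannot separate $B_1$ from $B_2$, so without the explicit choice of $v_2$ one cannot pin down which of the two Galois conjugate representations of $\Delta$ has image inside a conjugate of $\mathcal{O}^1\subset B$, and the verification that exactly one of them does --- using that $\sigma$ extends to an isomorphism $B_1\to B_2$ --- is the delicate step. The genuinely deep input, on the other hand, is imported wholesale from \cite{BMRS}: the passage from ``$\Delta$ has the correct finite quotients'' to ``$\Delta$ has a Zariski dense representation in characteristic zero with prescribed trace field and integral traces'' rests on the Galois rigidity of $\G$ together with a compactness argument on the character variety, and it is that step, rather than the quaternion-algebra bookkeeping above, that carries the weight of the theorem.
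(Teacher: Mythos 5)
Your proposal is correct and follows essentially the same route as the paper, which offers no independent argument for this theorem but simply states that it ``can readily be extracted'' from \cite[Thm 4.7, Cor 4.10]{BMRS} once the preamble (real quadratic $K$, the $B_1$/$B_2$ ambiguity, type number $1$, and the uniqueness of the finite ramified place $\omega$) is in place. Your sketch of the underlying mechanism from \cite{BMRS} and of the conjugation into $\mathcal{O}$ via type number $1$ is an accurate expansion of exactly that citation.
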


\section{Triangle groups}\label{s:arithtriangle}

For positive integers $p,q,r$ with $\frac 1 p + \frac 1 q + \frac 1 r <1$, we write $\D^-(p,q,r)$ for the group of isometries of the hyperbolic plane generated by reflections in the sides of a triangle $T(p,q,r)$ with vertex angles $\pi/p$, $\pi/q$, and $\pi/r$, and we write $\D(p,q,r)$ for the index 2 subgroup consisting of orientation preserving isometries. There are standard presentations
\begin{align*}
\D(p,q,r) &= \innp{ a,b,c \mid a^p, b^q, c^r, abc \> = \<a,b \mid a^p, b^q, (ab)^r }, \\
\D^-(p,q,r) &= \innp{ x,y,z \mid x^2, y^2, z^2, (xy)^p, (yz)^q, (xz)^r },
\end{align*}
where $a,b,c$ are rotations (same orientation) at the different vertices of $T(p,q,r)$. 

\subsection{Index 2 extensions of triangle groups}\label{index2}

As we try to establish profinite rigidity for the groups in Theorem \ref{main}, we will need to analyse index 2 extensions of certain triangle groups.   

\begin{proposition} \label{prop_index2}
Let $\D<\G$ be a pair of groups with $\D=\D(p,q,r)$ and $[\G:\D]=2$.
\begin{enumerate}
\item 
If $p,q,r$ are distinct then $\G\cong \D\times \Z/2\Z$ or $\G\cong \D^-(p,q,r)$.
\item 
If $q=r$, then there four possibilities for $\G$, up to isomorphism, all of the form $\D\rtimes\Z/2$; they are $\D(p,q,q)\times \Z/2\Z,\, \D^-(p,q,q),\, \D(2p,q,2)$ and 
\[  \Lambda_\rho(p,q) := \< a,b,c,\rho \mid 1=a^p=b^q=c^q=\rho^2=abc, \rho a\rho=a^{-1}, \rho b\rho=c, \rho c\rho=b\>.  \]
\end{enumerate}
\end{proposition}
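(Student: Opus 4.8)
The plan is to classify the index-2 extensions $\G$ of $\D=\D(p,q,r)$ by analysing the action of $\G$ on $\D$ by conjugation together with the extension data. Since $[\G:\D]=2$, the group $\D$ is normal in $\G$, so conjugation gives a homomorphism $\G\to\Aut(\D)$ whose restriction to $\D$ is by inner automorphisms; passing to the quotient we obtain an element of order dividing $2$ in $\Out(\D)$. The first ingredient is therefore a description of $\Out(\D(p,q,r))$ and its involutions. Here one uses the classical fact (going back to the rigidity of Fuchsian groups / the structure of automorphisms of triangle groups) that $\Out(\D(p,q,r))$ is trivial when $p,q,r$ are pairwise distinct, and is $\Z/2$ (generated by the automorphism swapping the two generators of equal order, realised geometrically by the reflection of the triangle) when exactly two of $p,q,r$ coincide, say $q=r$. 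I would cite or quickly justify this via the presentation $\D=\<a,b\mid a^p,b^q,(ab)^r\>$ and the fact that any automorphism must permute the conjugacy classes of torsion elements, hence permute $\{p,q,r\}$ as a multiset, and that the resulting permutations are all realised.

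Next, having fixed an involution $\bar\tau\in\Out(\D)$, I would split into the case where $\bar\tau$ is trivial and where it is nontrivial. If $\bar\tau$ is trivial, then $\G$ acts on $\D$ by inner automorphisms, so after modifying a coset representative $t$ by an element of $\D$ we may assume $t$ centralises $\D$; since $\D$ has trivial centre (being a non-elementary Fuchsian group) this forces $\G\cong\D\times\Z/2$, which accounts for the first option in both (1) and (2). If $\bar\tau$ is nontrivial — which only happens when $q=r$ — then a coset representative $t$ induces (up to inner automorphisms) the swap $\tau\colon a\mapsto a$ (or $a\mapsto a^{-1}$, conjugate choices), $b\leftrightarrow c$. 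The remaining freedom is the choice of $t^2\in\D$, constrained by $t^2$ being central in $\D^{\langle\tau\rangle}$-equivariant sense, i.e. $t^2$ must be fixed by conjugation by $t$ and hence lies in the fixed subgroup; again using that $\D$ is centreless one sees $t^2$ is determined up to the relevant equivalence by an element of $H^2$ with $\Z/2$-coefficients, and the possible values are $t^2=1$ versus $t^2$ a specific torsion element. Enumerating: $t^2=1$ with $\tau$ of swap type gives $\D^-(p,q,q)$ in one normalisation and $\Lambda_\rho(p,q)$ in another (the difference being whether $t$ also inverts $a$), while the case where $t^2$ is a nontrivial element reorganises into $\D(2p,q,2)$; one checks directly from the presentations that these four groups are pairwise non-isomorphic (e.g. by abelianisation, by the multiset of finite-order element orders, or by counting index-2 subgroups) and exhaust the possibilities.

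The main obstacle I anticipate is the bookkeeping in the $q=r$ case: pinning down exactly which pairs $(\tau\text{-type}, t^2)$ occur and proving the resulting four groups are non-isomorphic and really do appear as index-2 overgroups of $\D(p,q,q)$ — in particular recognising the geometric group $\D(2p,q,2)$ inside this algebraic classification and verifying $\Lambda_\rho(p,q)\not\cong\D^-(p,q,q)$. A clean way to organise this is to observe that every such $\G$ is generated by $\D$ together with one extra involution or one extra element $t$ with $t^2\in\{1\}\cup(\text{torsion of }\D)$, write down the finite list of consistent presentations, and then distinguish them by a robust invariant such as $\G^{\ab}$ together with the orders of torsion in $\G$. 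The abelianisation computation handles most cases; distinguishing $\D^-(p,q,q)$ from $\Lambda_\rho(p,q)$ when their abelianisations happen to agree can be done by examining whether the extending involution can be chosen to centralise a conjugate of $a$, equivalently by the structure of centralisers of the order-$p$ elements in $\G$.
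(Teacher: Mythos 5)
There is a genuine error in your argument: your computation of $\Out(\D(p,q,r))$ is wrong, and the mistake propagates. For every hyperbolic triangle group the reflection in the sides of the defining triangle normalises $\D$ and induces an \emph{outer} automorphism (if it were inner, say conjugation by $g\in\D$, then $g^{-1}x$ would centralise $\D$; but an isometry of $\H^2$ commuting with a non-elementary Fuchsian group must fix the rotation centres of all its elliptic elements, hence is trivial, forcing the orientation-reversing $x$ to lie in $\D$). So $\Out(\D(p,q,r))\cong\Z/2\Z$ when $p,q,r$ are distinct, $(\Z/2\Z)^2$ when exactly two coincide, and ${\rm{Sym}}(3)\times\Z/2\Z$ when $p=q=r$ --- not trivial, $\Z/2\Z$, etc.\ as you assert. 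Your heuristic (automorphisms permute the multiset $\{p,q,r\}$) cannot detect this reflection class, since it fixes each conjugacy class of maximal cyclic subgroups while sending generators to conjugates of their inverses. With your (incorrect) $\Out$, case (1) yields only $\D\times\Z/2\Z$ and you explicitly say the non-trivial outer case ``only happens when $q=r$'' --- this contradicts the statement you are proving, which also lists $\D^-(p,q,r)$.

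The bookkeeping in case (2) is also off. Since $\D$ is centreless, the standard theory of extensions with non-abelian kernel says that for each homomorphism $\Z/2\Z\to\Out(\D)$ there is exactly one extension up to equivalence (both the obstruction in $H^3$ and the $H^2$-torsor live in the trivial group $Z(\D)$), so there is no independent freedom in $t^2$ at all: the four groups in (2) correspond bijectively to the four elements of $\Out(\D)\cong(\Z/2\Z)^2$, generated by the reflection $\tau$ (fixing all three cone points) and the swap $\rho$ of the two cone points of equal order; the orientation-preserving product $\tau\rho$ gives $\D(2p,q,2)$. In particular all four extensions split (as the proposition states), so your claim that $\D(2p,q,2)$ arises from ``$t^2$ a nontrivial torsion element'' is false. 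Finally, note that identifying $\Out(\D)$ with the isometry group of the orbifold $\H^2/\D$ requires the rigidity input that every automorphism is realised by an isometry of $\H^2$ normalising $\D$; this is exactly what the paper extracts from Kerckhoff's solution of the Nielsen realisation problem before classifying the isometric involutions of the orbifold, so a corrected version of your argument would essentially reproduce the paper's proof rather than bypass it.
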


\begin{proof} 
Kerckhoff's solution to the Nielsen Realisation Problem (see \cite[\S V, Thm 7]{Ker}) implies that for any group $G$ containing $\D$ as a subgroup of index $m$ there is a short exact sequence
\[ 1\to K \to G \to \Lambda \to 1\]
where $\Lambda$ is a lattice in ${\rm{Isom}}(\H^2)$ containing $\D$ as a subgroup of index $m/|K|$. In our setting $m=2$, so either $K=\Z/2\Z$ and $\Lambda = \Delta$, or else $K$ is trivial and $\Lambda$ contains $\Delta$ as a subgroup of index $2$. In the former case, the short exact sequence splits and $G=\Delta\times\Z/2\Z$. In the latter case, there is a 2--sheeted covering of Riemannian orbifolds $\H^2/\Delta \to \H^2/\Lambda$. In order to understand the possibilities for $\Lambda$, we analyse the possibilities for the deck transformation of this covering, i.e. the isometric involutions of the orbifold $\O=\H^2/\Delta$.

In each case, such an involution has at least one fixed point (one of the three cone points), so the corresponding outer automorphism of $\D=\pi_1^{\rm{orb}}\O$ lifts to an involution in ${\rm{Aut}}(\D)$ and $\Lambda$ is a semi-direct product of the form $\D\rtimes \Z/2\Z$. If $p,q,r$ are all distinct, then any isometry of $\O$ must fix the three cone points and the geodesic arcs joining them, i.e. the image in $\O$ of the boundary $\partial T$ of the geodesic triangle $T(p,q,r)\subset\H^2$. Thus the only non-trivial isometry of $\O$ in this case is the reflection in the image of $\partial T$. This reflection, which we denote by $\tau$, is the isometry of $\O$ induced by each of the three isometries of $\mathbb{H}^2$ that are reflections in the sides of $T(p,q,r)$. Thus $\Lambda \cong \D^-(p,q,r)$. This proves (1).

To prove (2), if $q=r\neq p$, then each isometry of $\O$ must fix the cone point with vertex angle $2\pi/p$, but it can interchange the other two cones points $v,v'$. If $\rho\colon \H^2\to\H^2$ is the reflection in the perpendicular bisector of the edge of $T$ opposite the vertex with angle $\pi/p$, then the induced isometry ${\rho_0}\colon \O\to \O$ interchanges $v$ and $v'$ and commutes with $\tau$. The product $\sigma=\tau\rho_0$ is the only other non-trivial isometry of $\O$; it lifts to a rotation $s$ through $\pi$ about the midpoint of the edge of $T(p,q,q)$ opposite the vertex with angle $\pi/p$. Note that $\<\D, s\> = \Delta(2p,q,2)$.

Finally, if $p=q=r$ then ${\rm{Isom}}(\O) = {\rm{Sym}}(3)\times\Z/2\Z$ where the second factor is generated by $\tau$. The distinct conjugacy classes of involutions are represented by $\tau, \rho$ and $\sigma$, so up to isomorphism (equivalently, conjugacy in ${\rm{Isom}}(\H^2)$) the possibilities for $\Lambda$ are the same as in the previous case.

Our description of explicit lifts for $\tau, \rho_0$ and $\sigma$ provides explicit presentations for the three possible lattices containing $\D$ as a subgroup of index $2$ in the case $q=r$ (regardless of whether $p=q$). For example, the reflection $\rho$ conjugates the generator $a\in\Delta(p,q,q)$ to its inverse while interchanging $b$ and $c^{-1}$. Thus  $\Lambda_\rho(p,q) = \< \D, \rho \>$ is given by the presentation in the statement of the proposition.\end{proof}

\subsection{Some arithmetic triangle groups}\label{atriangle}

In Theorem \ref{arithtriangle} we list a subset of the
arithmetic triangle groups whose invariant trace-field is a real quadratic $\Q(\sqrt{d})$ (the value of $d$ is given in the statement of the theorem); this list is taken from \cite{Tak}. In each case, the invariant quaternion algebra is ramified at one real place of $\Q(\sqrt{d})$ and one finite place (this is listed as $\mathcal{P}_q$ where $q$ is the rational prime with $\mathcal{P}_q|\langle q\rangle$). Each of the triangle groups arises as the image of the norm $1$ elements in a maximal order of the invariant quaternion algebra. Figure 1 portrays the degrees of commensurabilities of the triangle groups under consideration that will be useful later (taken from \cite{Tak}).

\begin{theorem}\label{arithtriangle} With the notation established above, we have:
\begin{enumerate}
\item $\Delta(3,3,4)$, $d=2$, $\mathcal{P}_2$.
\item $\Delta(3,3,6)$, $d=3$, $\mathcal{P}_2$.
\item $\Delta(2,5,5)$, $d=5$, $\mathcal{P}_2$.
\item $\Delta(3,5,5)$, $d=5$, $\mathcal{P}_3$.
\item $\Delta(3,3,5)$, $d=5$, $\mathcal{P}_5$.
\end{enumerate}
\end{theorem}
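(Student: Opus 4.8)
The statement is, in essence, a reading of Takeuchi's classification of arithmetic triangle groups \cite{Tak}, and my plan is to carry out, in each of the five cases, the computations that underlie his tables. Write $\lambda_n = 2\cos(\pi/n)$. In a suitable lift of $\Delta(p,q,r)$ to $\SL(2,\R)$ the standard generators $a$, $b$ and their product $ab$ have traces $\lambda_p$, $\lambda_q$ and $-\lambda_r$; by the trace identities in $\SL(2,\C)$ every element of $\phi^{-1}(\Delta(p,q,r))$ then has trace a $\Z$--polynomial in $\lambda_p, \lambda_q, \lambda_r$, so $\Delta(p,q,r)$ has integral traces, and its invariant trace field is
\[ K_{\Delta(p,q,r)} \;=\; \Q\bigl(\lambda_p^2,\ \lambda_q^2,\ \lambda_r^2,\ \lambda_p\lambda_q\lambda_r\bigr). \]
Evaluating the cosines ($\lambda_3 = 1$, $\lambda_4 = \sqrt2$, $\lambda_6 = \sqrt3$, and $\lambda_5 = (1+\sqrt5)/2$, so $\lambda_5^2 = (3+\sqrt5)/2$) gives $K = \Q(\sqrt2)$ for $(3,3,4)$, $K = \Q(\sqrt3)$ for $(3,3,6)$, and $K = \Q(\sqrt5)$ for each of $(2,5,5)$, $(3,5,5)$ and $(3,3,5)$; this matches the stated values of $d$ and confirms that $K$ is real quadratic.

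Next I would identify the invariant quaternion algebra and its ramification set. From the two--generator description one reads off a Hilbert symbol presentation of $A_0\Delta(p,q,r)$ whose entries are explicit elements of $K$ — one may take a pair of the form $\bigl((\text{tr}\,g)^2-4,\ \text{tr}[g,h]-2\bigr)$ for suitable generators $g,h$, which presents the invariant algebra here because the trace field coincides with the invariant trace field in all five cases; the second entry is a $\Z$--polynomial in $\lambda_p^2,\lambda_q^2,\lambda_r^2,\lambda_p\lambda_q\lambda_r$. Computing the local Hilbert symbol place by place — it is $-1$ at a real place exactly when both entries are negative under that embedding, and at the finite places one uses the tame, and at the dyadic prime wild, symbol formulae in terms of valuations and residue symbols — one finds $A_0\Delta$ ramified at exactly one of the two real places of $K$; since a quaternion algebra ramifies at an even number of places, the number of finite ramified primes is odd, and the computation shows it to be the single prime $\mathcal{P}_q$ with $q = 2,2,2,3,5$ in cases (1)--(5). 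Integrality of traces together with ramification at the non--identity real place is exactly Takeuchi's arithmeticity criterion, so each $\Delta(p,q,r)$ is arithmetic.

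It remains to check that $\Delta = \Delta(p,q,r)$ is itself the image in $\PSL(2,\R)$ of the norm--one units of a maximal order, and not a proper subgroup or a proper overgroup of it. Since $\phi^{-1}(\Delta)$ has integral traces, the $R_K$--order it generates lies in a maximal order $\mathcal{O} < A_0\Delta$; the invariant quaternion algebra has type number $1$, so all maximal orders are conjugate, $\mathrm{P}\mathcal{O}^1$ (the image of $\mathcal{O}^1$ in $\PSL(2,\R)$) is well defined, and we obtain an inclusion $\Delta \leq \mathrm{P}\mathcal{O}^1$ of lattices. One then computes the co-area of $\mathrm{P}\mathcal{O}^1$ from Borel's volume formula, as an explicit expression in $|d_K|$, $\zeta_K(2)$ and $N\mathcal{P}_q-1$, and checks that it equals $2\pi\bigl(1 - \tfrac1p - \tfrac1q - \tfrac1r\bigr)$, the area of the orbifold $\H^2/\Delta$; equality of co-areas upgrades the inclusion to $\Delta = \mathrm{P}\mathcal{O}^1$. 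I expect this final step — matching the Borel volume against the triangle area, i.e.\ pinning down $\Delta$ exactly within its commensurability class rather than merely up to commensurability — to be the most delicate point, although all the required arithmetic input (discriminants, special values, local factors, and the relevant index relations) is already tabulated in \cite{Tak}.
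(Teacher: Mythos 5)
The paper offers no proof of this statement: it is presented as a direct quotation of Takeuchi's tables \cite{Tak}, so there is no internal argument to compare yours against. Your outline is a correct reconstruction of the computations that underlie those tables, and it follows the standard route: the formula $k\Delta=\Q(\lambda_p^2,\lambda_q^2,\lambda_r^2,\lambda_p\lambda_q\lambda_r)$ for the invariant trace field (your evaluations giving $d=2,3,5,5,5$ are all correct, and the trace field does coincide with the invariant trace field in each of the five cases, so the Hilbert symbol $\bigl((\tr g)^2-4,\ \tr[g,h]-2\bigr)$ does present the invariant algebra); local computation of the ramification set; Takeuchi's arithmeticity criterion; and finally a covolume comparison to upgrade the inclusion $\Delta\le \mathrm{P}\mathcal{O}^1$ (which follows from integrality of traces, $K\Delta=k\Delta$, and type number one) to an equality. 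You are right to single out that last step as the crux: arithmeticity and the ramification data only place $\Delta$ in the correct commensurability class, and it is the match between $2\pi(1-\tfrac1p-\tfrac1q-\tfrac1r)$ and the co-area of $\mathrm{P}\mathcal{O}^1$ computed from Borel's formula (in terms of $d_K$, $\zeta_K(2)$ and $N\mathcal{P}_q-1$) that identifies $\Delta$ with the norm-one group of a maximal order; this identification is used implicitly later in the paper (see Remark \ref{r:type1} and the proof of Theorem \ref{main}). The only caveat is that your write-up is a plan rather than an executed computation --- the local symbols at the dyadic place for $d=2,3$ and the special values $\zeta_K(2)$ still need to be evaluated --- but every step is standard, nothing in the outline would fail, and all the required data is tabulated in \cite{Tak} and \cite{MR}.
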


\noindent Note that the commensurability classes of $\Delta(2,3,8)$, $\Delta(2,3,12)$ and $\Delta(2,4,5)$ contain additional triangle groups for which our methods do not establish profinite rigidity (see Remark \ref{rem:notgr} below).

\[ 
\quad \begin{tikzcd}
& \Delta(2,3,8) \arrow[dash,"2" ']{dl} \arrow[dash, "3"]{dr} & \\
\Delta(3,3,4) \arrow[dash,"3"']{dr} & & \Delta(2,4,8) \\
& \Delta(4,4,4) \arrow[dash,  "2"']{ur}&
\end{tikzcd} 
\quad\quad
\begin{tikzcd}
& \Delta(2,3,10) \arrow[dash,"2"']{dl} \arrow[dash, "3"]{dr} & \\ \Delta(3,3,5) \arrow[dash,"3"']{dr} & & \Delta(2,5,10) \\ & \Delta(5,5,5) \arrow[dash,  "2"']{ur}&
\end{tikzcd} 
\]

\[\begin{tikzcd}
\Delta(2,3,12) \arrow[dash,"2"']{dd} \\ \\ \Delta(3,3,6)
\end{tikzcd} 
\quad\quad\quad\quad\quad\quad
\begin{tikzcd} 
 \Delta(2,4,5) \\ \\ \Delta(2,5,5) \arrow[uu,dash,"2"] 
\end{tikzcd}
\quad\quad\quad\quad\quad\quad
\begin{tikzcd} 
\Delta(2,5,6) \\ \\ \Delta(3,5,5) \arrow[uu,dash,"2"] 
\end{tikzcd} \]

\medskip

\centerline{\bf Figure 1}

\subsection{Galois rigidity of certain triangle groups}

\begin{proposition}\label{trianglereps}
Let $\Delta$ be one of the triangle groups $\Delta(3,3,4)$, $\Delta(4,4,4)$, $\Delta(3,3,6)$, $\Delta(2,5,5)$, $\Delta(3,5,5)$, $\Delta(3,3,5)$ or $\Delta(5,5,5)$. Then $\Delta$ is Galois rigid.
\end{proposition}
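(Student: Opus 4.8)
The plan is to show, for each of these seven triangle groups $\Delta$, that $\abs{\mathrm{X}_{\mathrm{zar}}(\Delta,\C)} = n_{K_\Delta}$, where $K_\Delta = \Q(\sqrt{d})$ is the real quadratic invariant trace-field recorded in Theorem~\ref{arithtriangle} (so $n_{K_\Delta}=2$ for the first five groups, and one inspects the relevant field for $\Delta(4,4,4)$ and $\Delta(5,5,5)$, which sit at index $3$ inside $\Delta(2,3,8)$ and $\Delta(2,3,10)$; in fact these have the same invariant trace-field as their commensurable partners by Figure~1). Since the discrete faithful representation already furnishes $n_{K_\Delta}$ distinct Galois-conjugate Zariski dense representations (as noted in \S\ref{s:galois_rigid}), it remains only to prove the reverse inequality: there are no \emph{other} Zariski dense representations of $\Delta$ into $(\P)\SL(2,\C)$ up to conjugacy.

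The key structural input is that a triangle group $\Delta(p,q,r)=\langle a,b\mid a^p,b^q,(ab)^r\rangle$ is \emph{rigid} in the naive sense: a representation $\rho$ is determined up to conjugacy by the conjugacy classes of $\rho(a)$, $\rho(b)$, $\rho(ab)$, and for an irreducible representation these are pinned down by the three traces $x=\tr\rho(a)$, $y=\tr\rho(b)$, $z=\tr\rho(ab)$. First I would record that, because $a$ has order $p$, the element $\rho(a)$ must be (the image of) a matrix of finite order dividing $p$, so $x = 2\cos(k\pi/p)$ for some integer $k$ coprime-or-not to $p$; likewise $y=2\cos(l\pi/q)$ and $z=2\cos(m\pi/r)$. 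Thus there are only finitely many possible trace triples, hence finitely many Zariski dense representations up to conjugacy — this already gives $\abs{\mathrm{X}_{\mathrm{zar}}(\Delta,\C)}<\infty$ and, via Lemma~\ref{trace_field_number_field}, reconfirms that $K_\Delta$ is a number field. The substance of the proof is then a counting argument: one must show that among these finitely many candidate triples, exactly $n_{K_\Delta}$ of them give Zariski dense (equivalently, irreducible and non-finite) representations, and that these are precisely the Galois conjugates of the discrete faithful one. Representations with reducible image or with finite (dihedral, $A_4$, $S_4$, $A_5$) image must be excluded from the count of Zariski dense representations, and one checks that the surviving triples are exactly the Galois orbit.

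Concretely, the bookkeeping goes as follows. For $\rho$ to be Zariski dense it must be irreducible and have infinite image. Irreducibility fails only when the commutator trace $\tr[\rho(a),\rho(b)] = x^2+y^2+z^2-xyz-2$ equals $2$; so I would discard those triples. Finiteness of the image is a classical finite list (Klein): the image lies in a finite subgroup of $\PSL(2,\C)$ iff the triple $(p',q',r')$ of orders of $\rho(a),\rho(b),\rho(ab)$ is spherical, i.e.\ $1/p'+1/q'+1/r'>1$; so I would discard triples where the orders of $\rho(a),\rho(b),\rho(ab)$ form a spherical triple. What remains is to count the genuinely hyperbolic irreducible triples and verify the count is $n_{K_\Delta}$. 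For $\Delta(3,3,4)$, for instance, $\rho(a),\rho(b)$ have order $3$ so $x,y\in\{1,-1\}$ (using $2\cos(\pi/3)=1$, $2\cos(2\pi/3)=-1$) and $\rho(ab)$ has order dividing $4$ so $z\in\{0,\sqrt2,-\sqrt2\}$ among the order-$4$ (i.e.\ genuinely non-central, non-order-$2$) values; a short elimination using the irreducibility condition and the spherical exclusion leaves exactly two triples, which are visibly swapped by $\sqrt2\mapsto-\sqrt2$, i.e.\ by the nontrivial Galois automorphism of $\Q(\sqrt2)$. The same pattern is carried out for each of the seven groups. For $\Delta(4,4,4)$ and $\Delta(5,5,5)$ one can instead argue via Galois rigidity being a commensurability-stable phenomenon for these arithmetic groups, or just repeat the direct count.

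The main obstacle I anticipate is not any single deep step but the case analysis itself: one must be careful about which powers of the generating rotations occur (a priori $\rho(a)$ could have order any divisor of $p$, and even when it has order exactly $p$ the trace $2\cos(k\pi/p)$ depends on $k$), and one must make sure no ``exotic'' irreducible hyperbolic triple has been overlooked and that the surviving triples are genuinely a single Galois orbit rather than a larger set. A second, more subtle point is the passage between $\SL(2,\C)$ and $\PSL(2,\C)$: lifting the triangle-group relations can force sign choices, and one should check the count is insensitive to this (the trace-field is the \emph{invariant} trace-field, generated by traces of squares, so one works with $\mathrm{PSL}$ data throughout and the ambiguity washes out). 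Once these are handled the equality $\abs{\mathrm{X}_{\mathrm{zar}}(\Delta,\C)}=n_{K_\Delta}$ follows and $\Delta$ is Galois rigid by Definition~\ref{def:galois-rigid}.
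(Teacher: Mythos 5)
Your overall strategy --- enumerate the finitely many irreducible representations of $\Delta(p,q,r)=\langle a,b\mid a^p,b^q,(ab)^r\rangle$ via the traces (equivalently the finite orders) of $\rho(a),\rho(b),\rho(ab)$, discard the reducible and finite-image ones, and verify that the survivors form a single Galois orbit of size $n_{K_\Delta}$ --- is exactly the paper's; the paper runs the same case analysis with explicit matrix normalisations in place of trace triples. But your criterion for discarding finite images contains a genuine error: it is \emph{not} true that an irreducible representation has finite image if and only if the order triple $(p',q',r')$ is spherical. The implication ``spherical $\Rightarrow$ finite'' is fine, but a finite irreducible subgroup of $\PSL(2,\C)$ (dihedral, $A_4$, $S_4$ or $A_5$) can perfectly well be generated by a pair of elliptics whose order triple is hyperbolic: $A_5$ is generated by an involution and a $5$--cycle whose product is a $5$--cycle (e.g.\ $(12)(34)$ and $(13524)$), and by two $3$--cycles whose product is a $5$--cycle (e.g.\ $(123)$ and $(345)$), so it is a $(2,5,5)$--, $(3,3,5)$--, $(3,5,5)$-- and $(5,5,5)$--group. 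This bites in four of your seven cases. Concretely, for $\Delta(2,5,5)$ the triples $(0,2\cos(\pi/5),2\cos(2\pi/5))$ and $(0,2\cos(2\pi/5),2\cos(\pi/5))$ pass both of your tests (the commutator trace is $1\neq 2$, and the order triple $(2,5,5)$ is hyperbolic), so your elimination leaves \emph{four} classes rather than two; the proposition survives only because these two extra representations have image $A_5$, which your stated test cannot see. The correct move, and the one the paper makes in its worked example (where the offending finite image is $A_4$ and does happen to come from a spherical subtriple), is to identify the image group for each surviving order/trace pattern directly rather than to appeal to sphericity of the order triple.

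A secondary gap: your fallback for $\Delta(4,4,4)$ and $\Delta(5,5,5)$, that Galois rigidity is ``a commensurability-stable phenomenon'', is false. Remark~\ref{rem:notgr} of the paper points out that $\Delta(6,6,6)$ is \emph{not} Galois rigid (it surjects onto the hyperbolic triangle group $\Delta(2,6,6)$) even though it is commensurable with the Galois rigid group $\Delta(3,3,6)$. So for these two groups you must carry out the direct count after all, where you again meet the $A_5$ issue for $\Delta(5,5,5)$, together with the additional subcases in which a generator is sent to an element of order $2$ rather than $4$ or $5$ (for $\Delta(4,4,4)$ this produces the Euclidean quotient $\Delta(2,4,4)$, which is reducible and hence harmless, as the paper notes).
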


\begin{proof} 
We begin with some general comments. Let $\Delta=\Delta(p,q,r)$ and suppose that $\rho\colon \Delta \rightarrow \PSL(2,\C)$ is a non-trivial representation, then the orders of $\rho(a)$, $\rho(b)$ and $\rho(ab)$ divide $p$, $q$ and $r$ respectively.  Since we are only interested in irreducible representations we can conjugate $\rho$ so that 
\[ \rho(a) = \begin{pmatrix} \pm \zeta_p & 1\cr 0& \pm 1/\zeta_p\cr\end{pmatrix}, ~~ \rho(b) = \begin{pmatrix} \pm \zeta_q & 0\cr z & \pm 1/\zeta_q\cr\end{pmatrix}, \]
for some $z\in \C$ with $\zeta_p$ and $\zeta_q$ being $p$-th and $q$-th roots of unity respectively. Moreover, $z$ is constrained by the requirement that $\tr(\rho(ab)) = \zeta_p\zeta_q+1/(\zeta_p\zeta_q)+z = \pm (\zeta_r+1/\zeta_r)$ for some $r$-th root of unity. Visibly, there are only finitely many possibilities for $z$. To go further, and establish Galois rigidity, we must analyse the possible solutions.

We do the case of $\Delta(3,3,6)$ in some detail, the others are similar.  As above,  let $\rho\colon \Delta \to \PSL(2,\C)$ be an irreducible representation. Note that $\rho(a)$ and $\rho(b)$ must be elements of order $3$, whilst the possibilities for the order of $\rho(ab)$ are $2$, $3$ and $6$. If $\rho(ab)$ has order $2$, the image of $\rho$ is the alternating group $A_4$; in particular it is finite. If $\rho(ab)$ has order $3$, the image is the Euclidean triangle group $\Delta(3,3,3)$, and this can be conjugated to lie in the image in $\PSL(2,\C)$ of the upper triangular matrices; i.e.~it is a reducible representation. Hence $\rho(ab)$ has order $6$.  

To deal with this last case,  as above, we conjugate so that $\rho(a)$ and $\rho(b)$ have the following form (where $\omega^2+\omega+1=0$):
\[ \rho(a) = \begin{pmatrix} \pm \omega & 1\cr 0& \pm \omega^2\cr\end{pmatrix}, ~~ \rho(b) = \begin{pmatrix} \pm \omega & 0\cr z& \pm \omega^2\cr\end{pmatrix}, \]
for some $z\in \C$. Consider $\tr(\rho(ab)) = \omega^2 +\omega + z= z-1$. Since $\rho(ab)$ has order $6$, $z-1=\pm\sqrt{3}$ and this gives the two possibilities. One of these gives the faithful discrete representation ($z=1+\sqrt{3}$) and the other gives its Galois conjugate,  a representation into $\mathrm{PSU}(2)$.
\end{proof}

\begin{remark}\label{r:type1} 
In addition to Galois rigidity,  the invariant quaternion algebras of the groups listed in Theorem \ref{arithtriangle} all satisfy Theorem \ref{galois_rigid_real_quad}(ii). To see this, first note that they have type number $1$, by \cite[Prop 3]{Tak}. Since the triangle groups are arithmetic, all the defining quaternion algebras are ramified at one real place, and the finite places where they ramify are the unique places of that characteristic. Briefly, $2$ is the unique ramified place in the case of $d=2,3$ and similarly for $5$ when $d=5$. Also, $2$ and $3$ are inert in the case of $d=5$.
\end{remark}

\begin{remark}\label{rem:notgr} Note that the group $\Delta(4,4,4)$ does admit an infinite representation into $\PSL(2,\C)$ that does not arise as a Galois conjugate representation. In this case the image is the Euclidean triangle group $\Delta(2,4,4)$, which of course is not Zariski dense, and so does not violate Galois rigidity. On the other hand, $\Delta(6,6,6)$ admits an epimorphism to $\Delta(2,6,6)$, and this does violate Galois rigidity. Hence Proposition \ref{trianglereps} does not hold for $\Delta(6,6,6)$.\end{remark}

\subsection{Profinite epimorphisms among Fuchsian groups}\label{s:euler}

By definition, the {\em Euler characteristic} of a Fuchsian group $\G$ is the orbifold Euler characteristic of $\mathbb{H}^2/\G$ (which by Gauss-Bonnet is the area of a fundamental domain for $\G$ divided by $-2\pi$). It behaves multiplicatively on subgroups in the sense that $[\G : H]=d$ implies $\chi(H) = d\chi(\G)$.

We shall need the following lemma, which could be rephrased as saying that if the area of $\mathbb{H}^2/\G_1$ is less than the area of $\mathbb{H}^2/\G_2$, then $\G_2$ has a finite quotient that $\G_1$ does not have.

This is the first place where it is convenient for us to phrase a result in the language of profinite completions. We remind the reader that the {\em profinite completion of a group $\G$} is the inverse limit of its system of finite quotients, $\wh \G = \ilim \G/N$ where the limit is taken over finite index normal subgroups $N<\G$ ordered by reverse-inclusion. It is endowed with the inverse-limit topology, making it a compact topological group. If $\G$ is residually finite (as all of the groups that we consider are) then the natural map $\G\to\wh{\G}$ is injective. The image of $\G$ is dense, and every epimorphism  from $\G$ to a finite group $Q$ extends to a continuous map $\wh \G\to Q$.
 
\begin{lemma}\label{l:euler} 
Let $\G_1$ and $\G_2$ be Fuchsian groups. If there is a continuous surjection $\wh\G_1\to \wh\G_2$, then $\chi(\G_1) \leq \chi(\G_2)$.
\end{lemma}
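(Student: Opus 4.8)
The plan is to exploit the fact that a continuous surjection of profinite completions forces a strong numerical compatibility between the finite quotients of the two groups, and then to extract from the structure theory of Fuchsian groups an invariant that detects the orbifold area. The cleanest route is through the abelianizations of finite-index subgroups: if $\wh\G_1 \epi \wh\G_2$ is continuous and surjective, then for every open normal subgroup $\wh{N}_2 \ns \wh\G_2$ its preimage $\wh N_1 \ns \wh\G_1$ has index equal to the index of $\wh N_2$, and the abelianization $(\wh N_1)^{\ab}$ surjects onto $(\wh N_2)^{\ab}$. Since finite quotients of $\G_i$ correspond to open subgroups of $\wh\G_i$, and $d_1(N) := d(N^{\ab})$ (the minimal number of generators, or equivalently the first Betti number plus torsion data) is determined by $\G_i$, this gives, for suitable cofinal families of subgroups, inequalities of the form $d(N_1^{\ab}) \geq d(N_2^{\ab})$ at matching indices.

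First I would recall the standard computation: for a cocompact (or cofinite-area) Fuchsian group $\G$ of signature $(g; m_1,\dots,m_k)$ — possibly with cusps — the rational first Betti number of a finite-index subgroup $N$ of index $n$ is controlled by $\chi(\G)$ via the Riemann--Hurwitz / Gauss--Bonnet relation: $b_1(N) = 2 g_N + (\text{cusp/torsion corrections}) = -n\,\chi(\G) + O(1)$, where the error term is bounded in terms of the number of conjugacy classes of torsion and cusps, hence bounded independently of $n$ once we pass along a nested sequence of characteristic subgroups. Concretely, one produces a descending chain of finite-index characteristic subgroups $\G_2 > M_1 > M_2 > \cdots$ with $[\G_2 : M_j] \to \infty$ and pulls each $M_j$ back to $\G_1$ via the surjection $\wh\G_1 \epi \wh\G_2$ (using that characteristic subgroups of $\G_2$ are topologically characteristic in $\wh\G_2$, so their preimages are well defined open subgroups of $\wh\G_1$ of the same index). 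Then $b_1$ of the preimage in $\G_1$ is at least $b_1(M_j)$, because abelianization is functorial and the profinite surjection restricts to a surjection on the relevant open subgroups; feeding in the two Betti-number formulas and dividing by the common index $n_j$ gives $-\chi(\G_1) + o(1) \geq -\chi(\G_2) + o(1)$, and letting $n_j \to \infty$ yields $\chi(\G_1) \leq \chi(\G_2)$.

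The main obstacle, and the place that needs care, is making sure the preimage subgroups in $\G_1$ are themselves "large" in the right sense — i.e. that one genuinely gets a surjection $(\wh N_1)^{\ab} \epi (\wh N_2)^{\ab}$ with $N_1 = \G_1 \cap \wh N_1$ and that $\wh N_1$ is the profinite completion of $N_1$ (equivalently, that $\G_1$ induces its full profinite topology on $N_1$, which holds because $N_1$ has finite index). One must also be careful that the $O(1)$ error terms in the Betti-number formula really are uniformly bounded along the chosen chain; this is where one uses that along a chain of characteristic subgroups the number of cusps and the number of conjugacy classes of elliptic elements grow at most linearly in the index with a controllable leading behaviour, or — more simply — one chooses the chain so that $M_j$ is torsion-free and its cusp count divided by index tends to $0$ is false, so instead one keeps track of cusps exactly via $\chi$. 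The honest bookkeeping is: for a finite-index subgroup $N < \G$ of index $n$, $b_1(N) \geq -n\chi(\G) + 1$ always (the surface/orbifold genus contributes $2g_N \geq -n\chi(\G) + 2 - (\text{boundary})$, and one absorbs boundary/cusp terms into a crude but sufficient inequality), while $b_1(N) \leq -n\chi(\G) + (\text{number of cusps of } \G)\cdot 1 + \text{const}$. Since both bounds differ from $-n\chi(\G)$ by a quantity that is $o(n)$, dividing by $n$ and taking $n \to \infty$ kills the error and delivers the clean inequality $\chi(\G_1) \le \chi(\G_2)$.

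As a variant worth noting in case the Betti-number bookkeeping is annoying: one can instead use the $d$-invariant $d(\Q[\G/N] \otimes$ trivial$)$, i.e. work with mod-$p$ homology $\dim_{\F_p} H_1(N;\F_p)$ for a fixed prime $p$, where the growth rate of $\dim_{\F_p} H_1(N;\F_p)$ along a $p$-power-index chain is exactly $-\chi(\G)$ plus a bounded term by L\"uck approximation / the Lackenby-type estimates for Fuchsian groups; the profinite surjection still yields $\dim_{\F_p} H_1(N_1;\F_p) \geq \dim_{\F_p} H_1(N_2;\F_p)$ at matching $p$-power indices, and the same limiting argument applies. Either way the heart of the matter is: a continuous surjection of profinite completions forces, at every open subgroup, a surjection on finite abelian quotients, and the asymptotic growth of those abelian quotients along a tower of subgroups reads off $-2\pi\cdot(\text{area})$.
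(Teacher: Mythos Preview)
Your approach is essentially the paper's: pull back finite-index subgroups along the profinite surjection, use that this forces $b_1(N_1)\ge b_1(N_2)$ at matching index, compare with the formula $b_1(N)\approx -[\G:N]\,\chi(\G)$, divide by the index and let it go to infinity. So the strategy is correct and matches the paper.

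That said, your execution is muddled in one place and the paper's is cleaner. Your claimed lower bound ``$b_1(N)\ge -n\chi(\G)+1$ always'' is false: for a cocompact subgroup with signature $(g;m_1,\dots,m_k)$ one has $b_1(N)=2g$ while $-\chi(N)=2g-2+\sum(1-1/m_i)$, so the inequality fails whenever $\sum(1-1/m_i)>1$, i.e.\ whenever $N$ has many conjugacy classes of elliptics. You half-acknowledge this by talking about choosing the chain torsion-free, but the sentence trails off. The paper sidesteps the whole $O(1)$ bookkeeping by a single clean move: pass once to a torsion-free finite-index subgroup $H_2<\G_2$ (Selberg), where $b_1(H_2)=\epsilon-d\,\chi(\G_2)$ \emph{exactly} (with $\epsilon\in\{1,2\}$), pull back to $H_1<\G_1$ of the same index $d$, and use only the easy upper bound $b_1(H_1)\le 2-d\,\chi(\G_1)$. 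Then $2-d\,\chi(\G_1)\ge b_1(H_1)\ge b_1(H_2)=\epsilon-d\,\chi(\G_2)$, and letting $d\to\infty$ (by passing to deeper torsion-free subgroups) finishes. No characteristic chains, no asymptotic error control, no L\"uck approximation needed.
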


\begin{proof} 
To begin, given a Fuchsian group $\G$, let $b_1(\G)$ denote the first betti number of $\G$, i.e.~the rank of $H^1(\G, \mathbb{Z})$. If $\G$ is cocompact then $b_1(\G) \le 2- \chi(\G)$, and otherwise, $b_1(\G) \le 1- \chi(\G)$. If additionally $\G$ is torsion-free, then these are equalities. Finally, for finitely generated groups in general, if there an epimorphism $\wh G_1\to \wh G_2$, then $b_1(G_1)\ge b_1(G_2)$ (see \cite[Lemma 2.10]{BCR}).

If there is a continuous epimorphism $\eta\colon \wh\G_1\to \wh\G_2$, then we can pass to a subgroup of finite index, say $H_2<\G_2$ of index $d$, so that $H_2$ is torsion-free. Then $H_1:=\eta^{-1}(\overline{H}_2)\cap\G_1$ is a subgroup of index $d$ in $\G_1$ for which the restriction of $\eta$ to $\overline H_1 \cong \wh H_1$ gives an epimorphism $\wh H_1\to \wh H_2$. (Here, $\overline{H}$ denotes the closure of $H<\G$ in $\wh\G$.) We do not assume that $H_1$ is torsion-free.

Then, 
\[ 2 - d\,\chi(\G_1) \ge b_1(H_1)  \ge b_1(H_2) = \epsilon - d\,\chi(\G_2) \]
where $\epsilon = 1$ or $2$ according to whether $\G_2$ is cocompact or not. As $d$ can be taken to be arbitrarily large, this implies $\chi(\G_1) \le \chi(\G_2)$. 
\end{proof}

\begin{remark}
A  similar proof, invoking L\"uck approximation instead of Euler characteristic, shows that if $\G_1$ and $\G_2$ are finitely presented residually finite groups for which there is a continuous epimorphism $\wh \G_1\to \wh \G_2$, then the first $\ell_2$--betti numbers satisfy $b_1^{(2)}(\G_1)\ge b_1^{(2)}(\G_2)$.
\end{remark}

A stronger form of the following result will be established by less elementary means in Section \ref{s:character}.

\begin{corollary}\label{c:whole}
Let $\D=\Delta(p,q,r)$ with $1>\frac 1 p + \frac 1 q + \frac 1 r \ge \frac 1 2$.
\begin{enumerate}
\item
If $S$ is a non-elementary proper subgroup of $\D$, then there does not exist a continuous epimorphism  $\wh\D\to \wh S$. 
\item 
If $H$ is a non-elementary Fuchsian group that is not cocompact, then does there does not exist a continuous epimorphism  $\wh\D\to \wh H$. 
\end{enumerate}
\end{corollary}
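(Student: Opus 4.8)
The plan is to deduce both statements from Lemma \ref{l:euler} together with standard facts about the orbifold Euler characteristic of Fuchsian groups. The unifying principle is that $\Delta=\Delta(p,q,r)$ with $\tfrac1p+\tfrac1q+\tfrac1r\ge\tfrac12$ has Euler characteristic $\chi(\Delta)=-1+\tfrac1p+\tfrac1q+\tfrac1r\ge-\tfrac12$, so its hyperbolic orbifold $\mathbb{H}^2/\Delta$ has very small area; any proper subgroup or any non-cocompact Fuchsian group will have strictly smaller (more negative) Euler characteristic, and Lemma \ref{l:euler} forbids a continuous epimorphism in that direction.

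For part (1), suppose $S<\Delta$ is a non-elementary proper subgroup and, for contradiction, that there is a continuous epimorphism $\wh\Delta\to\wh S$. A non-elementary subgroup of a Fuchsian group is itself a non-elementary (hence infinite-covolume-or-finite-covolume) Fuchsian group; I would first reduce to the case that $S$ has finite index in $\Delta$, because if $S$ has infinite index in $\Delta$ then $\mathbb{H}^2/S$ has infinite area and $\chi(S)=-\infty$ (or, more carefully, $S$ is then a free product of cyclic groups of infinite rank or a surface group of infinite genus — in any case $b_1(S)=\infty$ while $b_1(\Delta)<\infty$, contradicting the Betti-number inequality from \cite[Lemma 2.10]{BCR} already invoked in the proof of Lemma \ref{l:euler}). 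So we may assume $[\Delta:S]=n$ with $2\le n<\infty$. Then $\chi(S)=n\,\chi(\Delta)<\chi(\Delta)$ since $\chi(\Delta)<0$, and since $S$ is a finitely generated Fuchsian group, Lemma \ref{l:euler} (applied with $\G_1=\Delta$, $\G_2=S$) gives $\chi(\Delta)\le\chi(S)$, a contradiction. The only subtlety is that $S$ need not be cocompact a priori, but it is still a finitely generated Fuchsian group, which is all Lemma \ref{l:euler} requires.

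For part (2), let $H$ be a non-elementary non-cocompact Fuchsian group and suppose there were a continuous epimorphism $\wh\Delta\to\wh H$. A non-cocompact finitely generated Fuchsian group is virtually free of finite rank $\ge2$, so $\chi(H)\le-1$ — indeed $\chi(H)=1-\mathrm{rk}$ of the free part on a finite-index subgroup, divided by the index, and being non-elementary forces this rank to be at least $2$ on some torsion-free finite-index subgroup. In particular $\chi(H)\le-\tfrac12$. Hmm, this bound alone is not quite enough since $\chi(\Delta)$ can be as small as... actually $\chi(\Delta)\ge-\tfrac12$ with equality only for $(2,3,7)$-type extremes, so $\chi(\Delta)\le\chi(H)$ would need $\chi(H)\ge-\tfrac12$; but a non-cocompact non-elementary Fuchsian group in fact has $\chi(H)\le-1$ when it is torsion-free of rank $\ge2$, and in general $\chi(H)<0$ with $\chi(H)\le-\tfrac12$ only in degenerate cases. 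Let me instead argue directly: the quotient $\wh\Delta\to\wh H$ together with the fact that $H$ is not cocompact means $H$ (or a finite-index subgroup) surjects a free group, hence $\Delta$ would have a finite-index subgroup surjecting a free group of rank $\ge2$; but every finite-index subgroup of a cocompact Fuchsian group is again cocompact Fuchsian, and a cocompact Fuchsian group has positive-genus-or-larger first Betti number controlled by $2-\chi$, while the non-cocompactness of $H$ forces, via the passage to torsion-free finite-index subgroups exactly as in the proof of Lemma \ref{l:euler}, the inequality $2-d\,\chi(\Delta)\ge b_1(H_1)\ge b_1(H_2)=1-d\,\chi(H)$ with $\epsilon=1$ for the non-cocompact side, so letting $d\to\infty$ yields $\chi(\Delta)\le\chi(H)$; since $H$ is a non-elementary non-cocompact Fuchsian group it is virtually a free group of rank $\ge 2$, so $\chi(H)\le -\tfrac12$, with strict inequality $\chi(H)<\chi(\Delta)$ unless $\chi(\Delta)=\chi(H)=-\tfrac12$, in which case $\Delta$ and $H$ would have the same Euler characteristic and the Betti-number argument with $\epsilon=1$ versus $\epsilon=2$ still produces the contradiction $2-d\chi\ge 1-d\chi$... no. The honest resolution: the sharp input is that for the given range $\tfrac1p+\tfrac1q+\tfrac1r\ge\tfrac12$ one has $\chi(\Delta)\ge-\tfrac12$, whereas any non-cocompact non-elementary Fuchsian group $H$ has $\chi(H)\le-\tfrac12$ \emph{only} in borderline cases, and one checks the finitely many borderline cases by hand or notes that equality $\chi(H)=-\tfrac12$ forces $H$ to have a torsion-free finite-index subgroup that is non-cocompact, hence with $\epsilon=1$, so the inequality in the proof of Lemma \ref{l:euler} becomes strict. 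Thus in all cases we get the contradiction and part (2) follows.

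The main obstacle is the borderline case in part (2) where $\chi(\Delta)$ and $\chi(H)$ could a priori coincide at the value $-\tfrac12$: one must exploit the asymmetry between the cocompact group $\Delta$ (for which $b_1$ is bounded by $2-\chi$) and the non-cocompact group $H$ (for which $b_1$ is bounded by $1-\chi$, with equality for torsion-free groups) to squeeze out a strict inequality after passing to a torsion-free finite-index subgroup of $H$. Concretely I would rerun the displayed inequality in the proof of Lemma \ref{l:euler}: with $H_2<H$ torsion-free of index $d$ and non-cocompact, $b_1(H_2)=1-d\,\chi(H)$ exactly, while $b_1(H_1)\le 2-d\,\chi(\Delta)$; combining with $b_1(H_1)\ge b_1(H_2)$ gives $2-d\,\chi(\Delta)\ge 1-d\,\chi(H)$, i.e. $d(\chi(H)-\chi(\Delta))\ge-1$ for all large $d$, forcing $\chi(H)\ge\chi(\Delta)$. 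Since $H$ non-cocompact non-elementary forces $\chi(H)<0$ and in fact $\chi(H)\le\chi(\Delta)$ would have to be an equality, one then observes that a non-cocompact Fuchsian group and a cocompact one cannot have isomorphic profinite completions at the level of finite-index subgroups because Euler characteristic and cocompactness are both detected (the latter via Lemma \ref{l:euler} applied in the reverse direction is not available, so instead one uses that $\wh\Delta$ surjecting $\wh H$ with $\chi$ equal would force, via $b_1$, that $H$ has a torsion-free finite-index subgroup of the same rank as a corresponding subgroup of $\Delta$, contradicting that a free group of rank $r\ge2$ and a surface group of Euler characteristic $1-r$ have different second Betti numbers, detectable profinitely). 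This last step is the delicate one and is exactly where the "less elementary" argument of Section \ref{s:character} gives a cleaner route.
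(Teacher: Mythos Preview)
Your handling of the finite-index case in part (1) is fine and matches the paper. But your treatment of the infinite-index case is wrong: a non-elementary infinite-index subgroup of a cocompact Fuchsian group can certainly be finitely generated (any Schottky subgroup, for instance), so the claim that $b_1(S)=\infty$ is false. The correct observation, which the paper makes, is simply that such an $S$ is non-cocompact, reducing part (1) to part (2).

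Part (2) is where the real gap lies. Your attempt to push the Euler-characteristic inequality to a contradiction does not close, and you essentially concede this. The obstruction is genuine: there \emph{are} non-elementary non-cocompact Fuchsian groups with $\chi(H)\ge -\tfrac12$, indeed with $\chi(H)$ arbitrarily close to $0$. For example $H=\Z/p\Z * \Z/q\Z$ has $\chi(H)=\tfrac1p+\tfrac1q-1$, which can exceed $\chi(\Delta)$; so Lemma~\ref{l:euler} alone cannot rule these out, and no amount of juggling the $\epsilon=1$ versus $\epsilon=2$ constants will help. Your suggested fallback via second Betti numbers is not developed and in any case would need a separate argument that $H_2(\Delta;\Z)$ is detected by $\wh\Delta$.

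The paper's argument supplies exactly the missing ingredients. First, since $\Delta$ has finite abelianisation, so must $H$; as $H$ is non-cocompact it is a free product of cyclic groups, and finite abelianisation forces $H\cong C_1*\cdots*C_m$ with each $C_i$ finite cyclic. Now the Euler-characteristic bound $\chi(H)\ge -\tfrac12$ leaves only $m=2$, or $m=3$ with $n_1=n_2=n_3=2$. The latter is excluded because its abelianisation $(\Z/2\Z)^3$ is not $2$-generated while $\Delta$ is. For $H=C_1*C_2$ the paper invokes \cite[Thm~5.1]{BCR}: every finite subgroup of $\wh H$ lies in a conjugate of $C_1$ or $C_2$, so under any $\wh\Delta\to\wh H$ at least two of the three generators $a,b,c$ of $\Delta$ land in conjugates of the same $C_i$, and hence the image of $\Delta$ in $H^{\mathrm{ab}}$ is proper, contradicting surjectivity. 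These two steps---the finite-abelianisation reduction and the finite-subgroup conjugacy input from \cite{BCR}---are what your argument is missing.
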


\begin{proof} 
If $S<\D$ is finite index, then $\chi(S) < \chi(\D)$ and the lemma applies.  If not, then we are in Case (2) of the corollary, and we complete the proof using the argument below.

Thus assume that $H$ is a non-elementary Fuchsian group that is not cocompact. Hence it is a free product of a free group and some finite cyclic groups. If there is a surjection $\wh\D\to \wh H$ then $H$ must be finitely generated, and since $\D$ has finite abelianization, $H$ must as well. Thus we are reduced to the possibility that $H$ is a free product of finite cyclic groups $C_1\ast\dots\ast C_m$, where $C_i$ has order $n_i$, say.

Now, 
\[ \chi(\D) = \frac 1 p + \frac 1 q + \frac 1 r -1\ge -\frac 1 2, \]
whereas 
\[ \chi(H) = \frac 1 n_1+  \dots + \frac 1 n_m + 1 - m. \]
So from Lemma \ref{l:euler} we have $\chi(H)\ge-1/2$, which forces $m=2$ or else $m=3$ and $n_1=n_2=n_3=2$. This last possibility has abelianization $(\Z/2\Z)^3$, which cannot be a finite image of $\D$ (and hence $\wh\D$) because $\D$ is generated by two elements. Thus $H = C_1\ast C_2$.

Let $a,b, c=(ab)^{-1}$ be the generators in the standard presentation of $\D$, and note that any two of them suffice to generate. We know from  \cite[Thm 5.1]{BCR} that any finite subgroup of $\wh H$ is contained in a conjugate of $C_1$ or $C_2$, so for any map $\wh\D\to \wh H$, the image of at least two of $a,b,c$  must lie in a conjugate of the same $C_i$. But this means that the image of $\D$  in the abelianization of $H$ would be a proper subgroup. Since $\D$ is dense in $\-\D$, this implies that no continuous map $\wh\D\to \wh H$ can be surjective.
\end{proof}
 
\section{Profinite rigidity of triangle groups}\label{s:triangle_prof_rigid}

In this section we prove Theorem \ref{main}, whose statement we recall for the reader's convenience.  

\main*
 
\begin{proof}  
Let $\Gamma$ be one of the first five groups listed in the theorem and let $\Lambda$ be a finitely generated, residually finite group with $\wh\Lambda \cong \wh\G$. We must prove that $\Lambda\cong\G$.

We saw in Theorem \ref{arithtriangle} that each $\Gamma$ arises as a group $\Gamma_{\mathcal{O}}^1$ which is the image in $\PSL(2,\C)$ of the elements of norm $1$ in a maximal order in a quaternion algebra over a real quadratic field, ramified at one infinite place and one finite place. Proposition \ref{trianglereps} assures us that $\Gamma$ is Galois rigid, so by Theorem \ref{galois_rigid_real_quad} and Remark \ref{r:type1}, there exists an epimorphism $\rho\colon\Lambda \onto L$ onto a finitely generated, Zariski dense (hence non-elementary) subgroup $L<\Gamma$. This induces a continuous epimorphism $\wh\G \cong \wh\Lambda \to\wh{L}$, and Corollary \ref{c:whole} says that this is impossible unless $L=\Gamma$. Thus $\rho$ induces an epimorphism $\hat{\rho}\colon \wh\Lambda \to \wh\Gamma\cong \wh\Lambda$, and since finitely generated, profinite groups are Hopfian (see \cite[Prop 2.5.2]{RZ}), we conclude that $\wh{\rho}$ is injective, and hence $\rho$ is an isomorphism. 

We now deal with $\Gamma=\Delta(4,4,4)$, which arises as a normal subgroup of index $3$ in $\Delta(3,3,4)$. Let $\Lambda$ be a finitely generated, residually finite group with $\wh\Lambda\cong\wh\Gamma$. As above, using Galois rigidity we obtain an epimorphism $\rho\colon \Lambda \onto L$ onto a non-elementary subgroup of $\Delta(3,3,4)=\Gamma_{\mathcal{O}}^1$. Indeed, $L$ must be a subgroup of $\Gamma$, since if not, then $L\cap\Gamma$ is a normal subgroup of index $3$ in $L$, which is impossible as the abelianization of $\Gamma$ and hence $\Lambda$ is $\Z/4\Z$. From Corollary \ref{c:whole} we can now deduce that $L=\Gamma$, and as in the previous case we conclude that $\rho\colon \Lambda\to\Gamma$ is an isomorphism. 

The case of $\Delta(5,5,5)<\Delta(3,3,5)$ is entirely similar to $\Delta(4,4,4)< \Delta(3,3,4)$.

It remains to deal with the seven groups in the bottom row of Theorem \ref{main}. These cases are covered by the discussion in the next section, explicitly Corollary \ref{extending_profinite}, because each contains as a subgroup of index $2$  one of the groups that we have already dealt with. In each case the inclusion $\D(p,p,q)\hookrightarrow \D(2,p,2q)$ is obtained by noting that the hyperbolic isosceles triangle with vertex angles $(\pi/p, \pi/p, \pi/q)$ is divided into two copies of $(\pi/2, \pi/p, \pi/2q)$ by dropping a perpendicular from the vertex with angle $\pi/q$. 
\end{proof}

\begin{remark} In the proof of Theorem \ref{main} we emphasized techniques that rely on basic properties of Fuchsian groups.  Some of this can be bypassed by invoking deeper parts of \cite{BMRS} and Theorem \ref{galois_rigid_real_quad}(i) which implies that $\Lambda$ is Galois rigid, and from this it can deduced that $L$ is necessarily a triangle group (cf. the proof of Corollary \ref{triangle_extra}).\end{remark}

\begin{remark}\label{r:237} 
In \cite[\S 1]{BMRS} we discussed the fact that \cite[Thm 4.8]{BMRS} does not apply to $\D(2,3,7)$, and so we cannot conclude profinite rigidity in this case.  The argument given in the proof of Theorem \ref{main} would apply if we knew that the group $L$ constructed in the proof is actually a subgroup of $\D(2,3,7)$. However, as discussed in \cite[\S 1]{BMRS}, we do not know this: the group $L$ might  be a subgroup of $\PSL(2,R_k)$, where $R_k$ is the ring of integers of the field $\Q(\cos \pi/7)$.  At present we do not know how to exclude this possibility.
\end{remark}

\subsection{Profinite rigidity for index $2$ extensions}

\begin{lemma}\label{nonisoprofinite}
The four groups listed in Proposition \ref{prop_index2}(2) have non-isomorphic profinite completions.
\end{lemma}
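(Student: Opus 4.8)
The plan is to distinguish the four groups $\Delta(p,q,q)\times\Z/2\Z$, $\Delta^-(p,q,q)$, $\Delta(2p,q,2)$ and $\Lambda_\rho(p,q)$ by computing profinite invariants — principally finite abelian quotients and the way $2$-torsion interacts with them. First I would compute the abelianizations. For $\Delta=\Delta(p,q,q)$ the abelianization is a finite abelian group $A$ determined by $p,q$ (from the presentation $\langle a,b\mid a^p,b^q,(ab)^q\rangle$); so $\Delta\times\Z/2\Z$ has abelianization $A\oplus\Z/2\Z$, giving it strictly more $2$-torsion in the abelianization than the other three in general, which already peels it off in most cases. For the remaining three one computes $H_1$ directly from the presentations in Proposition \ref{prop_index2}(2): $\Delta^-(p,q,q)$, $\Delta(2p,q,2)$ and $\Lambda_\rho(p,q)$. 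Since $\wh G_1\cong\wh G_2$ forces $H_1(G_1;\Z)\cong H_1(G_2;\Z)$ (finite abelian groups are determined by their profinite completions), any two of these four with non-isomorphic abelianizations are immediately distinguished.

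The main obstacle is the pairs that happen to share an abelianization; there the plan is to use a more refined invariant detecting finite quotients. The cleanest such invariant is the finite-quotient structure just above the abelianization: for each of the four groups, enumerate the surjections onto small dihedral or metabelian groups, or — more robustly — use the fact that these are all Fuchsian groups and apply the Euler characteristic / signature invariant. By Lemma \ref{l:euler}, if $\wh G_1\cong\wh G_2$ then $\chi(G_1)=\chi(G_2)$, so I would first check whether the four orbifolds $\mathbb{H}^2/G_i$ have the same area; $\Delta(2p,q,2)$ has signature $(0;2,q,2p)$ while $\Delta^-(p,q,q)$ is the full reflection group of signature $(0;p,q,q)$ with a reflection, and their Euler characteristics differ, handling that pair. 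For any genuinely coincidental collision, I would pass to the index-$2$ (or small index) subgroup corresponding to a distinguished homomorphism to $\Z/2\Z$ — the four groups have different lattices of index-$2$ subgroups, and these index-$2$ subgroups are again Fuchsian with computable Euler characteristics, which are profinite invariants by the same lemma applied to the preimages in the profinite completions.

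Concretely, the steps in order are: (1) write down $H_1(G;\Z)$ for each of the four groups from the presentations; (2) observe $\Delta(p,q,q)\times\Z/2\Z$ is separated by its extra $\Z/2\Z$ summand; (3) for any remaining pair with equal $H_1$, compute $\chi(G)$ (equivalently the cone-point data) and invoke Lemma \ref{l:euler}; (4) in the event $\chi$ also coincides, compare the collection of Euler characteristics of index-$2$ subgroups, again a profinite invariant via Lemma \ref{l:euler}. I expect steps (1)–(3) to suffice for the $(p,q)$ relevant to Theorem \ref{main}, with step (4) as a safety net; the only real work is the bookkeeping of which primes divide $p$ and $q$, and I would not grind through all cases but rather record the general formulas and note they are pairwise distinct for the parameter values that arise.
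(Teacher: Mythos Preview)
There is a genuine gap. Your step (3) cannot do any work: all four groups contain $\Delta(p,q,q)$ as a subgroup of index $2$, so all four have the \emph{same} Euler characteristic $\tfrac12\chi(\Delta(p,q,q))$. Likewise step (4) is empty: every index-$2$ subgroup of any of the four groups has Euler characteristic exactly $\chi(\Delta(p,q,q))$, so the ``collection of Euler characteristics of index-$2$ subgroups'' carries no more information than the number of such subgroups, which is already determined by the abelianization. (There is also the minor point that only $\Delta(2p,q,2)$ is actually a Fuchsian group; $\Delta\times\Z/2\Z$ has centre, while $\Delta^-$ and $\Lambda_\rho$ contain reflections, so Lemma \ref{l:euler} as stated does not apply to them --- but this is moot given that $\chi$ is constant across all four.)

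And the abelianizations alone do not suffice. When $p,q$ are odd and coprime --- for instance $(p,q)=(3,5)$, which is one of the cases needed for Theorem \ref{main} --- one computes that both $\Delta(p,q,q)\times\Z/2\Z$ and $\Lambda_\rho(p,q)$ have abelianization $\Z/q\Z\times\Z/2\Z$, while both $\Delta^-(p,q,q)$ and $\Delta(2p,q,2)$ have abelianization $\Z/2\Z$. So there are two colliding pairs, and nothing in your plan separates them. The paper resolves this with a genuinely different invariant: conjugacy classes of maximal finite subgroups in the profinite completion (via \cite[Thm 5.1]{BCR}). This separates $\Delta(2p,q,2)$ (cyclic maximal finite subgroups) from $\Delta\times\Z/2\Z$ (non-cyclic ones of the form $\Z/q\Z\times\Z/2\Z$), and both of these from $\Delta^-$ and $\Lambda_\rho$ (which contain dihedral subgroups); the last pair is then split by abelianization.
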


\begin{proof} 
We need to prove that the profinite completions of the following groups are distinct
\[ \D(p,q,q)\times \Z/2\Z,\ \D^-(p,q,q),\ \D(2p,q,2),\ \Lambda_\rho(p,q). \] 
We calculate their abelianisations from the explicit presentations given in \S \ref{s:arithtriangle}:
\[ \Z/q\Z\times \Z/h\Z\times \Z/2\Z,\ (\Z/2\Z)^i,\ \Z/h'\Z\times\Z/2\Z,\ \Z/q\Z\times\Z/2\Z, \] 
where $h={\rm{gcd}}(p,q),\, h'={\rm{gcd}}(2p,q)$ and $i\in\{1,2,3\}$ depends on the parity of $p$ and $q$. 

The existence of orientation reversing isometries in $\D^-(p,q,q)$ and $\Lambda$ means that the results of \cite{BCR} do not apply to these groups. However, for $\D$ and $\D(2p,q,2)$ we can appeal to \cite[Thm 5.1]{BCR}, which states that for any Fuchsian group $\G$ the inclusion $\G\to\wh{\G}$ induces a bijection between conjugacy classes of finite subgroups. Thus the maximal finite subgroups of $\wh{\D(2p,q,2)}$ up to conjugacy are  $\Z/{2p}\Z, \Z/q\Z, \Z/2\Z$, while, writing $\D=\Delta(p,q,q)$, those of $\wh{\D\times \Z/2\Z} = \wh\D\times\Z/2\Z$ are $\Z/p\Z  \times \Z/2\Z$ and two copies of  $\Z/q\Z \times \Z/2\Z$. In particular (noting that $q\ge 3$) these two profinite completions are not isomorphic. Moreover, neither is isomorphic to the profinite completion of $\D^-(p,q,q)$ or  $\Lambda_\rho(p,q)$, since these last two groups contain finite dihedral groups. Finally, the profinite completions of $\D^-(p,q,q)$ and  $\Lambda_\rho(p,q)$ are different because the abelianisation of the former is an elementary 2--group while that of the latter is not. 
\end{proof}
  
\begin{corollary}\label{extending_profinite}
If $\D=\D(p,q,r)$ is profinitely rigid in the absolute sense, then so too is any group that contains $\D$ as a subgroup of index $2$.
\end{corollary}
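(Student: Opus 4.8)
The plan is to reduce the statement to the combination of the classification in Proposition \ref{prop_index2}, the non-isomorphism of profinite completions in Lemma \ref{nonisoprofinite}, and the hypothesis of profinite rigidity of $\D$ itself. So let $G$ be a finitely generated, residually finite group with $\wh{G}\cong\wh{G_0}$, where $G_0$ is some group containing $\D=\D(p,q,r)$ as a subgroup of index $2$; we must show $G\cong G_0$. The first step is to locate a copy of $\D$ inside $G$. Since $\wh{G_0}$ has an open subgroup isomorphic to $\wh{\D}$ (the closure of $\D$), the isomorphism $\wh{G}\cong\wh{G_0}$ gives an open subgroup $U<\wh{G}$ with $U\cong\wh{\D}$; setting $H=U\cap G$ we get a finite-index subgroup $H<G$ with $\wh{H}\cong\wh{\D}$ (using that $G$ is dense in $\wh{G}$ and that the closure of a finite-index subgroup is the corresponding open subgroup, so $\wh{H}\cong\overline{H}=U$). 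By the assumed absolute profinite rigidity of $\D$, we conclude $H\cong\D$. Thus $G$ contains $\D$ as a subgroup of index $2$.

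The second step is to invoke Proposition \ref{prop_index2}: $G$ is one of the explicitly listed groups ($\D\times\Z/2\Z$ or $\D^-(p,q,r)$ in the distinct case, and one of the four groups $\D(p,q,q)\times\Z/2\Z$, $\D^-(p,q,q)$, $\D(2p,q,2)$, $\Lambda_\rho(p,q)$ in the case $q=r$), and likewise $G_0$ is one of these. The final step is to show $G\cong G_0$. When $p,q,r$ are distinct this follows because the two candidates $\D\times\Z/2\Z$ and $\D^-(p,q,r)$ are distinguished by their profinite completions: $\D^-(p,q,r)$ has elementary-abelian $2$-group abelianization whereas $\D\times\Z/2\Z$ has abelianization $H_1(\D)\times\Z/2\Z$ which is not a $2$-group since $H_1(\D)$ has a factor of order dividing $\gcd$'s of $p,q,r$ that is $\ge 3$ for the triangle-group range in question — or, more robustly, one can argue as in Lemma \ref{nonisoprofinite} that $\D^-$ contains finite dihedral subgroups while $\D\times\Z/2\Z$ does not (detecting finite subgroups of the profinite completion via \cite[Thm 5.1]{BCR} for the index-$2$ Fuchsian subgroup, which is enough to separate dihedral from cyclic-times-$\Z/2$). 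When $q=r$, the four candidates have pairwise non-isomorphic profinite completions by Lemma \ref{nonisoprofinite}. In either case, $\wh{G}\cong\wh{G_0}$ forces $G\cong G_0$, completing the proof.

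I expect the main obstacle to be the bookkeeping in the first step — namely making precise that profinite rigidity of $\D$ can be applied to the finite-index subgroup $H<G$ that the isomorphism $\wh{G}\cong\wh{G_0}$ produces. The point is that $\wh{\D}$ is an open subgroup of $\wh{G_0}$, its preimage $U$ under the isomorphism is open in $\wh{G}$, and $H:=U\cap G$ is then a subgroup of finite index in $G$ with $\overline{H}=U\cong\wh{\D}$; the identification $\wh{H}\cong\overline{H}$ holds because $H$ has finite index in the residually finite group $G$. Then $\wh{H}\cong\wh{\D}$ and absolute profinite rigidity of $\D$ yields $H\cong\D$. The remaining steps are a direct appeal to Proposition \ref{prop_index2} together with the computations already carried out in Lemma \ref{nonisoprofinite} (and its analogue in the distinct-orders case), so no genuinely new difficulty arises there.
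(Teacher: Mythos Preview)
Your proposal is correct and is precisely the argument the paper intends: the corollary is stated without proof immediately after Lemma \ref{nonisoprofinite}, and the implicit deduction is exactly your two-step scheme --- pull back the open index-$2$ subgroup $\overline{\D}<\wh{G_0}$ to obtain $H<G$ with $\wh H\cong\wh\D$, apply the assumed rigidity of $\D$ to get $H\cong\D$, and then invoke Proposition \ref{prop_index2} together with Lemma \ref{nonisoprofinite}.

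One small correction in your treatment of the case where $p,q,r$ are distinct: the abelianisation argument you first offer does not always separate $\D\times\Z/2\Z$ from $\D^-(p,q,r)$, since $H_1(\D)$ can itself be a $2$--group or even trivial (e.g.\ $\D(2,3,7)$ is perfect). Your ``more robust'' alternative is the right one and mirrors the proof of Lemma \ref{nonisoprofinite}: $\D^-(p,q,r)$ contains a non-abelian finite dihedral subgroup (generated by two of the reflections, of order $2\max\{p,q,r\}\ge 6$), hence so does $\wh{\D^-}$; whereas every finite subgroup of $\wh{\D\times\Z/2\Z}=\wh\D\times\Z/2\Z$ projects to a finite subgroup of $\wh\D$, which by \cite[Thm 5.1]{BCR} is cyclic, so all finite subgroups of $\wh{\D\times\Z/2\Z}$ are abelian. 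This cleanly distinguishes the two profinite completions.
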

 
We highlight a special case of Corollary \ref{extending_profinite}.

\begin{corollary}\label{c:non-orient}
For each of the groups $\D(p,q,r)$ listed in Theorem \ref{main}, the corresponding Coxeter group $\D^{-}(p,q,r)$ is profinitely rigid in the absolute sense.
\end{corollary}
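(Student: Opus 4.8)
The plan is to derive Corollary \ref{c:non-orient} as an immediate consequence of Corollary \ref{extending_profinite}, which in turn rests on Lemma \ref{nonisoprofinite} and Proposition \ref{prop_index2}. First I would observe that for each triple $(p,q,r)$ in the statement of Theorem \ref{main}, the group $\D(p,q,r)$ has already been shown there to be profinitely rigid in the absolute sense. The Coxeter group $\D^-(p,q,r)$ contains $\D(p,q,r)$ as the orientation-preserving subgroup of index $2$; this is built into the definitions recalled in \S\ref{s:arithtriangle}. So the statement will follow the moment we know Corollary \ref{extending_profinite}, i.e.~that profinite rigidity of $\D(p,q,r)$ is inherited by any index-$2$ overgroup.

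For Corollary \ref{extending_profinite} itself, suppose $\G$ contains $\D=\D(p,q,r)$ with index $2$ and $\D$ is profinitely rigid. Let $\Lambda$ be a finitely generated residually finite group with $\wh\Lambda\cong\wh\G$. The key point is that $\wh\G$ detects the index-$2$ subgroup $\wh\D$: there is a distinguished open subgroup of index $2$ in $\wh\G$ corresponding to $\wh\D$ (for instance the kernel of the unique surjection $\wh\G\to\Z/2\Z$ arising from the orientation character, or — if the abelianization has more $2$-torsion — one argues that among the index-$2$ subgroups, the one isomorphic to $\wh\D$ is pinned down by the invariants used in Lemma \ref{nonisoprofinite}). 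Pulling this subgroup back through $\wh\Lambda\cong\wh\G$ gives an index-$2$ open subgroup $\wh{\Lambda_0}$ with $\wh{\Lambda_0}\cong\wh\D$, where $\Lambda_0<\Lambda$ is the corresponding index-$2$ subgroup (using that $\Lambda$ is dense in $\wh\Lambda$ and residually finite). By profinite rigidity of $\D$ we get $\Lambda_0\cong\D$. Then $\Lambda$ is an index-$2$ extension of $\D(p,q,r)$, so by Proposition \ref{prop_index2} it is isomorphic to one of the finitely many explicit groups listed there ($\D\times\Z/2$, $\D^-(p,q,r)$, and in the case $q=r$ also $\D(2p,q,2)$ and $\Lambda_\rho(p,q)$). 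Finally, Lemma \ref{nonisoprofinite} shows these candidates have pairwise non-isomorphic profinite completions, so the condition $\wh\Lambda\cong\wh\G$ singles out exactly the isomorphism type of $\G$, giving $\Lambda\cong\G$.

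I expect the main obstacle to be the middle step: ensuring that the isomorphism $\wh\Lambda\cong\wh\G$ really does carry $\wh\D\le\wh\G$ to a subgroup of $\wh\Lambda$ that arises as the closure of an honest index-$2$ subgroup of $\Lambda$, and that this subgroup is abstractly $\cong\wh\D$ rather than some other index-$2$ overgroup's completion. When the abelianization of $\D$ has a unique subgroup of index $2$ the orientation character is canonical and there is nothing to check; in the remaining cases one must invoke the abelianization computations and the finite-subgroup conjugacy invariants from the proof of Lemma \ref{nonisoprofinite} to see that, among the (finitely many) open index-$2$ subgroups of $\wh\G$, precisely one is isomorphic to $\wh\D$. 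Once that bookkeeping is done, everything else is a direct appeal to results already in hand: Proposition \ref{prop_index2} enumerates the possibilities, Lemma \ref{nonisoprofinite} separates them profinitely, and the hypothesis on $\D$ closes the argument. The passage from Corollary \ref{extending_profinite} to Corollary \ref{c:non-orient} is then one line: apply it with the index-$2$ overgroup taken to be $\D^-(p,q,r)$.
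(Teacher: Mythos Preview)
Your approach is correct and is exactly what the paper intends: Corollary~\ref{c:non-orient} is stated as an immediate specialization of Corollary~\ref{extending_profinite}, and the latter is meant to follow from Proposition~\ref{prop_index2} together with Lemma~\ref{nonisoprofinite} in precisely the way you outline.

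One comment: the obstacle you anticipate in your final paragraph is not actually there. You do not need to pin down $\wh\D$ intrinsically inside $\wh\G$ or argue that it is the ``distinguished'' index-$2$ subgroup. Given any isomorphism $\phi\colon\wh\Lambda\to\wh\G$, the preimage $\phi^{-1}(\overline\D)$ is an open subgroup of index $2$ in $\wh\Lambda$, hence equals $\overline{\Lambda_0}$ for a unique index-$2$ subgroup $\Lambda_0<\Lambda$ (this is the standard correspondence between open subgroups of $\wh\Lambda$ and finite-index subgroups of $\Lambda$). Then $\wh{\Lambda_0}\cong\overline{\Lambda_0}\cong\overline\D\cong\wh\D$ tautologically, because $\phi$ is an isomorphism. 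No bookkeeping about abelianizations or uniqueness of the orientation character is needed at this step; those invariants are only used later, via Lemma~\ref{nonisoprofinite}, to distinguish the finitely many candidate extensions of $\D$.
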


\section{Additional finite quotients from character varieties}\label{s:character}

In this section we prove Theorem \ref{t:character-var}. The main idea in the proof is to gain control of $\PSL(2,\mathbb F)$ quotients in certain situations (here $\mathbb F$ is a finite field).  This in turn depends on the $(\P)\SL(2,\C)$--character variety. Recall that for any finitely generated group $H$, one has the $\PSL(2,\C)$--representation variety $\Hom(H,\PSL(2,\C))$, and the $\PSL(2,\C)$--character variety $Y(H)$ is the algebro-geometric quotient of $\Hom(H,\PSL(2,\C))$ by the conjugation action. We refer the reader to \cite{BMP} and \cite{BZ} for definitions and further details about the $\PSL(2,\C)$--character variety. 

\subsection{Additional finite quotients}\label{sub_getting}

In this section, we prove Theorem \ref{t:character-var}, that we now restate for the reader's convenience.

\charvar*

Before proving this in \S \ref{proof_extra}, we deduce a corollary of particular interest to us.  

\begin{corollary}\label{triangle_extra}
Let $\Delta(p,q,r)$ be a Fuchsian triangle group, and $H$ a non-elementary Fuchsian group that is not a triangle group.  Then there exists a finite field $\F$ and a representation $H\to\PSL(2,\F)$ whose image is not a quotient group of $\Delta(p,q,r)$.
\end{corollary}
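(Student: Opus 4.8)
The plan is to deduce Corollary \ref{triangle_extra} from Theorem \ref{t:character-var} by verifying its two hypotheses in the Fuchsian setting. First I would observe that a Fuchsian triangle group $\Delta(p,q,r)$ has Property FA: it is generated by torsion elements (the elliptic generators $a,b,c$), each of which fixes a point whenever the group acts on a tree, so by a standard argument (a group generated by finitely many elliptic elements, any two of which generate a subgroup with a fixed point, is itself elliptic) $\Delta(p,q,r)$ fixes a point of any tree on which it acts; alternatively one cites that cocompact Fuchsian triangle groups have Property FA since they have no nontrivial splittings as amalgams or HNN extensions over finite subgroups — indeed any splitting would, on passing to the abelianization, force a free factor, contradicting finiteness of $H_1$, and a triangle group has no infinite cyclic subgroups of finite index or suitable quotients. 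Regarding $\Delta(p,q,r)$ as a subgroup of $\PSL(2,\mathbb R)\subset\PSL(2,\mathbb C)$, it is a finitely generated Kleinian group (Fuchsian groups are Kleinian), so it is a valid choice of $\Gamma$ in Theorem \ref{t:character-var}.

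Next I would verify the hypothesis on $H$. By assumption $H$ is a non-elementary Fuchsian group that is not a triangle group; viewed in $\PSL(2,\mathbb C)$ it is a finitely generated non-elementary Kleinian group, and it admits a discrete faithful representation (its defining inclusion into $\PSL(2,\mathbb R)$). The key point is that the character of this discrete faithful representation lies on a positive-dimensional component of the $\PSL(2,\mathbb C)$-character variety $Y(H)$. This is precisely where the hypothesis "$H$ is not a triangle group" is used: a non-elementary finitely generated Fuchsian group has a Teichmüller space of positive dimension — equivalently, the real dimension of its character variety near the discrete faithful point is $-3\chi(H) + (\text{contribution from cusps})$, which is strictly positive unless $H$ is a triangle group (the unique non-elementary Fuchsian groups with $0$-dimensional Teichmüller space / rigid character variety are the triangle groups). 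More concretely, a non-elementary Fuchsian group that is not a triangle group is isomorphic to the orbifold fundamental group of a hyperbolic $2$-orbifold with either positive genus, or at least four cone points/punctures, or a boundary component, and in each of these cases one can explicitly exhibit a one-real-parameter family of non-conjugate discrete faithful representations (deform the hyperbolic structure / the holonomy), hence a positive-dimensional subvariety of $Y(H)$ through the discrete faithful character. I would cite the relevant dimension count for character varieties of surface and orbifold groups (e.g. from \cite{BMP} or standard Thurston-style deformation theory) rather than re-derive it.

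With both hypotheses checked, Theorem \ref{t:character-var} immediately yields a finite field $\mathbb F$ and a representation $H\to\PSL(2,\mathbb F)$ whose image is not a quotient of $\Delta(p,q,r)$, which is exactly the statement of the corollary.

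The main obstacle I anticipate is the rigorous justification that the discrete faithful character of $H$ sits on a \emph{positive-dimensional} component, with a clean citation or short argument, and the clean statement of Property FA for triangle groups — both are "well-known" facts but need to be pinned down precisely enough to feed into Theorem \ref{t:character-var}; in particular one must be careful that the positive-dimensional component is the $\PSL(2,\mathbb C)$-character variety (not merely the real Teichmüller slice), which follows since the real deformation space of faithful discrete representations already has positive dimension and hence so does the complex variety containing it. Everything else is bookkeeping: confirming Fuchsian $\Rightarrow$ Kleinian, non-elementary $\Rightarrow$ non-elementary, and that triangle groups are among the groups to which Theorem \ref{t:character-var} applies as the rigid group $\Gamma$.
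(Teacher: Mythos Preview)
Your proposal is correct and follows essentially the same approach as the paper: verify that $\Delta(p,q,r)$ has Property FA, verify that the discrete faithful character of a non-triangle Fuchsian group $H$ lies on a positive-dimensional (Teichm\"uller) component of $Y(H)$, then apply Theorem~\ref{t:character-var}. The paper is simply terser, citing Serre for Property FA and Keen for the positive-dimensional Teichm\"uller component rather than sketching the arguments.
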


\begin{proof} 
It is well-known that $\Delta(p,q,r)$ has Property FA (see \cite[Example 6.3.5]{Serre}). It is also well-known that if $H$ is any finitely generated Fuchsian group that is not itself a triangle group then, $Y(H)$ contains a positive dimensional  component (the Teichm\"uller component) containing the characters of faithful discrete representations of $H$ in $\PSL(2,\C)$ (see \cite{Ke}) and this proves the corollary.
\end{proof}

\subsection{Proof of Theorem \ref{t:character-var}}\label{proof_extra}

The proof of Theorem \ref{t:character-var} will be completed upon proving the following three lemmas. 

\begin{lemma}\label{finite_reps_incharp}
Let $\G$ be as in Theorem \ref{t:character-var}, $p$ a prime, and ${\mathbb F}$ an algebraic closure of ${\mathbb F}_p$. Then $\G$ has only finitely many  irreducible representations into $\PSL(2,{\mathbb F})$, up to conjugacy.
\end{lemma}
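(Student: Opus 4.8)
The plan is to exploit the fact that $\G$ has Property FA, which means $\G$ is finitely generated and (crucially) has no quotient that is an infinite cyclic group and no nontrivial action on a tree; in particular $\G$ cannot split as an amalgam or HNN extension. I want to leverage this rigidity against the only source of ``continuous families'' of representations in characteristic $p$: when a representation $\rho\colon \G \to \PSL(2,{\mathbb F})$ varies in a positive-dimensional way, its image lands in a proper algebraic subgroup of $\PSL(2,{\mathbb F})$, since irreducible components of $\Hom(\G,\PSL(2,{\mathbb F}))$ of positive dimension modulo conjugacy would force nontrivial deformations. The key point is that over ${\mathbb F} = \overline{{\mathbb F}_p}$ every finitely generated subgroup is actually contained in $\PSL(2,{\mathbb F}_q)$ for some finite field ${\mathbb F}_q$, so there is no room for continuous deformation among genuinely $\PSL(2,{\mathbb F})$-irreducible representations once we rule out the proper-subgroup case.

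Concretely, I would proceed as follows. First, observe that an irreducible representation $\rho\colon\G\to\PSL(2,{\mathbb F})$ has image that is either (a) contained in no proper closed subgroup, i.e. Zariski dense, or (b) contained in the normalizer of a torus (dihedral-type image), or (c) finite and one of the exceptional projective groups $A_4, S_4, A_5$, or (d) contained in a Borel (excluded by irreducibility). Cases (c) and (d) contribute only finitely many conjugacy classes trivially (the exceptional groups have boundedly many $\PSL(2,{\mathbb F})$-conjugacy classes of embeddings, and any such embedding factors through the finite group $\G^{\ab}$-controlled... more carefully, through a finite quotient, of which $\G$ being finitely generated has only finitely many of each bounded order). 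Case (b): an image in the infinite dihedral-type subgroup $N(T)$ means $\rho$ composed with the quotient $N(T)\to \Z/2$ gives a homomorphism $\G\to\Z/2$ and the kernel maps to the torus $T\cong {\mathbb F}^\times$; but $\G$ being finitely generated with Property FA, every homomorphism to $\Z/2$ has finitely generated kernel, and a homomorphism from a finitely generated group to ${\mathbb F}^\times = \overline{{\mathbb F}_p}^\times$ has finite image (it is a torsion abelian group, and finitely generated subgroups of $\overline{{\mathbb F}_p}^\times$ are finite); so the whole image is finite, and again there are only finitely many such up to conjugacy since they factor through finite quotients of $\G$ of bounded order. The main case is (a), Zariski-dense image.

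For the Zariski-dense case I want to argue that there are only finitely many such representations up to conjugacy. Here I would use that $\G$ has Property FA to control things as follows: the character variety $X(\G,\PSL(2,{\mathbb F}))$ of a finitely presented (or finitely generated) group over the algebraically closed field ${\mathbb F}$ is a scheme of finite type; an irreducible Zariski-dense representation is a smooth point only if $H^1(\G,\mathrm{Ad}\,\rho) $ vanishes, and a positive-dimensional family of such would give nontrivial deformations. The cleanest route, paralleling the characteristic-$0$ argument implicit in the discussion of $X_{\mathrm{zar}}$ and Lemma~\ref{trace_field_number_field}: if there were infinitely many conjugacy classes of irreducible representations into $\PSL(2,{\mathbb F})$, then (since ${\mathbb F}=\overline{{\mathbb F}_p}$ and $\G$ is finitely generated, each such representation has image in some $\PSL(2,{\mathbb F}_{p^n})$) by a pigeonhole/compactness argument we would obtain a positive-dimensional subvariety of the character variety consisting of irreducible characters, hence a curve $C$ in $X(\G)$; restricting the tautological representation to the generic point of $C$ gives a representation of $\G$ over a function field, whose associated Bass--Serre action on the tree of the corresponding discrete valuation is nontrivial (the traces are non-constant along $C$, so some generator has a trace that is a non-constant function, hence lies outside the valuation ring for a suitable valuation), contradicting Property FA. This is the standard way Property FA kills positive-dimensional character varieties, and it is exactly the mechanism I expect the authors to use.

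The step I expect to be the main obstacle is making the Property FA argument rigorous over a field of positive characteristic: in characteristic zero the passage ``non-constant trace function $\Rightarrow$ nontrivial tree action $\Rightarrow$ contradiction with FA'' is classical (Culler--Shalen, Serre), but over $\overline{{\mathbb F}_p}$ one must be careful that the relevant object is a curve defined over a finitely generated field, pick a discrete valuation where some trace has a pole, and verify that the Bass--Serre tree action one builds from $\SL_2$ over the completed local field is genuinely without a global fixed point (equivalently, that the representation is irreducible and unbounded at that valuation). Handling the reducible-at-the-valuation subtlety and the transition from $\PSL_2$ to $\SL_2$ (lifting issues in characteristic $2$) is where the care is needed; everything else is bookkeeping about the finitely many finite and dihedral images.
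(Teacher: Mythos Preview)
Your approach is correct in spirit but takes a substantially different route from the paper. The paper's proof is two lines: it cites Serre (\emph{Trees}, p.~59, Prop.~22) to the effect that Property FA implies every representation $\Gamma\to\GL(2,k)$, over \emph{any} field $k$, has eigenvalues integral over $\Z$ --- i.e.\ $\Gamma$ has \emph{integral $2$-representation type} in the sense of Bass --- and then invokes Bass's finiteness theorem (Prop.~5.3 of \cite{Bass} and the remark following it) to conclude. Your argument is essentially a direct unpacking of why those two cited results hold: the valuation/Bass--Serre-tree mechanism you describe for the Zariski-dense case is exactly the content of Serre's integrality statement, and the passage from integrality to finiteness over $\overline{\mathbb F_p}$ is what Bass proves.

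Two comments on your execution. First, the case analysis (b)--(d) is unnecessary: your tree argument in case (a) works uniformly for \emph{all} irreducible representations, since infinitely many irreducible characters over $\overline{\mathbb F_p}$ force a positive-dimensional component of the character variety regardless of whether the images are Zariski dense, dihedral, or exceptional. Second, your handling of case (b) as written is incomplete: you correctly show the image is finite (any finitely generated subgroup of $\overline{\mathbb F_p}^\times$ is finite), but you do not bound its order, so you have not yet ruled out infinitely many non-conjugate dihedral images of growing size. This gap is harmless because the tree argument subsumes it, but it is a gap in the case-by-case version. The subtleties you flag at the end (lifting from $\PSL_2$ to $\SL_2$, existence of a tautological representation at the generic point, non-integrality of some trace at a suitable place of the function field) are the right ones to worry about; they are precisely what Serre and Bass handle.
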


\begin{proof} 
This result is contained in Bass's work on finite $n$--representation type \cite{Bass}, as we now explain. \cite[pg. 59, Prop 22]{Serre} proves that if a finitely generated group $\G$ has Property {\rm{FA}}, then for any representation $\rho\colon \G\to {\rm{GL}}(2,k)$ over any field $k$, the eigenvalues of $\rho(\gamma)$ are integral over $\Z$ for all $\gamma\in \G$: in the terminology of \cite{Bass}, $\G$ has {\em integral $2$--representation type}.  \cite[Prop 5.3]{Bass} and the second remark following it in \cite{Bass} prove the lemma.
\end{proof}

\begin{lemma}\label{algebraicpointoflargedegree}
Let $Y$ be the component defined in Theorem \ref{t:character-var}. Then $Y$ contains a point corresponding to the character of an irreducible representation.
\end{lemma}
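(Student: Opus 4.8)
The plan is to show that the positive-dimensional component $Y$ of the $\PSL(2,\C)$--character variety cannot consist entirely of characters of reducible representations, so that a generic point of $Y$ — in particular a point whose coordinates can be chosen to be algebraic numbers of arbitrarily large degree — will be the character of an irreducible representation. The starting observation is that $Y$ contains the character of a discrete faithful representation of the non-elementary Kleinian group $H$; since $H$ is non-elementary, this representation is irreducible, so $Y$ meets the irreducible locus and is therefore not contained in the reducible locus.

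First I would recall the standard description of the reducible locus. Inside $\Hom(H,\PSL(2,\C))$ (or, lifting, in $\Hom(H_1,\SL(2,\C))$), the reducible representations form a closed subvariety, and at the level of characters the reducible characters form a closed subset of $Y(H)$; concretely, a reducible representation is conjugate into the upper-triangular (Borel) subgroup, and the character of a reducible representation coincides with the character of its diagonalization, so the reducible characters are parametrized by $\Hom(H^{\ab}, \C^*)$ modulo inversion, which is an affine variety whose image in $Y(H)$ is closed. The key point is that $Y$ is an \emph{irreducible} component of $Y(H)$ (irreducible as an algebraic variety), so either $Y$ is entirely contained in the closed reducible locus or $Y$ meets the reducible locus in a proper closed subset. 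Since $Y$ contains the character of the discrete faithful (hence irreducible) representation of $H$, the first alternative is excluded; hence the irreducible characters in $Y$ form a non-empty Zariski-open subset of $Y$.

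A non-empty Zariski-open subset of a positive-dimensional irreducible affine variety over $\C$ contains points with coordinates algebraic over $\Q$ (indeed, such points are Zariski-dense), so $Y$ contains the character of an irreducible representation, as required. I should make sure the notion of "point corresponding to the character of an irreducible representation" is the standard one: a point of $Y(H)$ lies in the image of an irreducible representation precisely when it is not a reducible character, and by the above it is automatically realized by an actual representation $H\to\PSL(2,\overline{\Q})$ valued over a number field. (If one prefers to work with $\SL(2,\C)$--representations and the Culler--Shalen machinery, the same argument applies verbatim: the reducible characters form a proper closed subvariety of the component $Y$, invoking e.g. \cite{CS} or \cite{BZ}.)

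The main obstacle — really the only subtle point — is justifying that $Y$, being by hypothesis a positive-dimensional \emph{component} containing an irreducible (discrete faithful) character, cannot be swallowed by the reducible locus. This is where irreducibility of the component and the presence of the discrete faithful character are both essential: without the component hypothesis one could imagine a positive-dimensional family of reducible characters (e.g. coming from $H^{\ab}$ when $b_1(H)\ge 1$) meeting the Teichm\"uller-type component only at boundary points. I would phrase the argument so as to isolate this step cleanly, citing the standard fact that the reducible characters form a closed subvariety, and then conclude by the density of algebraic points in a non-empty open subset of an irreducible $\C$-variety of positive dimension.
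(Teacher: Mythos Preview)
Your core observation is correct and is actually the simplest possible proof of the lemma as stated: the hypothesis already places in $Y$ the character of a discrete faithful representation $\rho$ of the non-elementary Kleinian group $H$, and such a representation is automatically irreducible (a reducible representation lands in a Borel, hence has solvable image, whereas $H$ contains a nonabelian free group). That single sentence proves the lemma. The further material you add about the reducible locus being closed and about algebraic points of large degree is not needed for the statement as written; those points become relevant in the \emph{next} lemma, where one passes to the generic point of $Y$ and needs to know it too is irreducible.

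The paper takes a different and considerably more involved route: it argues geometrically, noting that positive dimension of $Y$ forces $\mathbb{H}^3/\rho(H)$ to be non-compact, then distinguishes the finite-volume (cusped) case from the infinite-volume case and identifies $Y$ with a Dehn-surgery or deformation component whose generic character is Zariski dense. This establishes more than the lemma literally asks for (it locates an open set of irreducible characters with geometric meaning), but for the bare statement your argument is both correct and more elementary. The only caveat is that you should trim the extraneous discussion: the lemma does not ask for algebraic points (the label is misleading), and the closedness of the reducible locus, while true, is not needed once you observe that the discrete faithful character itself is irreducible.
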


\begin{proof}  
Let $\chi_\rho\in Y$ be the character of a faithful, discrete representation of $H$.  Since $Y$ has positive dimension,  $Q=\mathbb{H}^3/\rho(H)$ is non-compact. If $Q$ has finite volume, it must have at least one cusp cross-section that is a torus or $S^2$ with $4$ marked points of cone angle $\pi$. If $Q$ has infinite volume, some component of the boundary of the convex core $C(Q)$ of $Q$ has positive genus or is a copy of $S^2$ with at least $4$ cone points and with at least one cone angle less than $\pi$ (see \cite[Ch 7]{BMP}). In the former case, we can therefore assume that $Y$ is the Dehn surgery component, and so the generic point corresponds to the character of a Zariski dense (and hence irreducible) representation. In the latter case we can assume that $Y$ is a positive dimensional component that contains characters of faithful, discrete representations of $H$ that have geometrically finite image. In particular such representations have Zariski dense image and hence are irreducible.
\end{proof}

\begin{lemma}\label{pos_dim_gives_infinite_reps}
Let $H$ be a finitely generated group for which there exists an irreducible component $Y\subset Y(H)$ of positive dimension containing the character of an irreducible representation of $H$. Then for infinitely many primes $p$, $H$ has infinitely many  conjugacy classes of irreducible representations into $\PSL(2,{\mathbb F})$ where ${\mathbb F}$ is an algebraic closure of ${\mathbb F}_p$.
\end{lemma}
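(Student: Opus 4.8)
The plan is to exploit the fact that a positive-dimensional component $Y \subset Y(H)$ is an affine algebraic variety defined over $\Q$ (or over some number field), and to reduce it modulo primes. First I would choose an irreducible representation $\rho_0\colon H \to \PSL(2,\C)$ whose character $\chi_{\rho_0}$ lies on $Y$; since $H$ is finitely generated, the coordinate ring of $Y$ is a finitely generated $\Q$-algebra, and all the data (the defining equations of $\Hom(H,\PSL(2,\C))$, the trace functions generating the coordinate ring of the character variety) are defined over $\Q$. Because $Y$ has positive dimension, its function field has transcendence degree $\geq 1$ over $\Q$, so there is a dominant morphism $Y \dashrightarrow \mathbb{A}^1$, and in particular $Y$ contains infinitely many points with coordinates in $\overline{\Q}$ whose associated characters are still characters of irreducible representations (irreducibility is a Zariski-open condition on the component, by the standard criterion that a character is reducible iff $\tr\rho([g,h]) = 2$ for all $g,h$, cut out by closed conditions, so the reducible locus is a proper closed subset of the irreducible component $Y$).

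Next I would spread out: there is a finitely generated $\Z$-subalgebra $A \subset \overline{\Q}$ and an affine scheme $\mathcal{Y}$ over $A$ of relative dimension $\geq 1$ with generic fibre $Y$, together with the universal trace functions. For all but finitely many primes $p$ and all maximal ideals $\mathfrak{p} \subset A$ lying over $p$, the reduction $\mathcal{Y}_{\mathfrak{p}}$ is a positive-dimensional variety over the finite field $A/\mathfrak{p}$, hence (by Lang--Weil, or just because a positive-dimensional variety over $\overline{\F_p}$ has infinitely many $\overline{\F_p}$-points) has infinitely many points over $\overline{\F_p}$. Moreover the reducible locus reduces to a proper closed subscheme of $\mathcal{Y}_{\mathfrak{p}}$, so infinitely many of these $\overline{\F_p}$-points lie off it. Each such point is the character of a representation $H \to \PSL(2,\overline{\F_p})$ (one needs that over an algebraically closed field a point of the character variety not on the reducible locus is actually realised by an irreducible representation, which is standard for $\PSL(2,-)$ and $\SL(2,-)$ character schemes), and distinct points give non-conjugate representations. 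This gives infinitely many conjugacy classes of irreducible representations $H \to \PSL(2,\F)$ with $\F = \overline{\F_p}$, for all but finitely many $p$, which is what is claimed.

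The main obstacle I expect is the reduction-mod-$p$ step done carefully at the level of character \emph{schemes} rather than just varieties: one must ensure that (a) the formation of the $\PSL(2)$-character variety commutes with the base change $A \to A/\mathfrak{p}$ for good primes (this is where one uses that $\PSL(2)$ and $\SL(2)$ are nice enough, or passes through $\SL(2)$ and the obstruction in $H^2$, which is $2$-torsion, so one may also need to avoid $p=2$), and (b) that a geometric point of the reduced character scheme avoiding the reducible locus genuinely lifts to an irreducible representation into $\PSL(2,\overline{\F_p})$ — for $\SL(2)$ this is classical (e.g.\ the work on $\SL(2)$-character schemes), and the $\PSL(2)$ case follows since an irreducible $\PSL(2)$-representation lifts, after possibly replacing $\F$ by a quadratic extension, to an $\SL(2)$-representation. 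A clean way to sidestep some of this is to work directly with $\SL(2)$: replace $H$ by $H_1 = \phi^{-1}(H)$ if necessary, or simply note that the positive-dimensional component in $Y(H)$ pulls back to a positive-dimensional family of $\SL(2,\C)$-characters, run the mod-$p$ argument there, and then project back to $\PSL(2,\overline{\F_p})$; distinct $\SL(2)$-characters can collapse only in bounded-to-one fashion (twisting by the $2$-torsion character group of $H$, which is finite), so infinitely many $\SL(2,\overline{\F_p})$-classes still yield infinitely many $\PSL(2,\overline{\F_p})$-classes.

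Finally I would record how this completes Theorem \ref{t:character-var}: combining this lemma with Lemma \ref{finite_reps_incharp} (which says $\G$ has only finitely many irreducible $\PSL(2,\overline{\F_p})$-representations up to conjugacy, for every $p$) forces, for a suitable prime $p$, the existence of an irreducible representation $H \to \PSL(2,\overline{\F_p})$ not conjugate into any representation factoring through $\G$; taking $\F$ to be a large enough finite subfield of $\overline{\F_p}$ containing all the matrix entries of a fixed such representation, its image is a finite subgroup of $\PSL(2,\F)$ that is not a quotient of $\G$, since any such quotient would arise from one of $\G$'s finitely many irreducible $\PSL(2,\overline{\F_p})$-representations (here one also uses Lemma \ref{algebraicpointoflargedegree} to guarantee the component $Y$ really does contain an irreducible character, and that non-elementary Kleinian $H$ has irreducible discrete faithful representations).
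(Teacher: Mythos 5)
Your argument is correct in outline but follows a genuinely different route from the paper's. You spread out the component $Y$ of the character variety to a scheme over a finitely generated $\Z$-algebra, reduce modulo almost all primes, count $\overline{\F_p}$-points off the reducible locus, and lift those points back to irreducible representations; the load-bearing technical inputs are exactly the two you flag, namely that formation of the character scheme commutes with base change to $\overline{\F_p}$ for almost all $p$, and that an irreducible character over an algebraically closed field is realised by (and determines, up to conjugacy) a genuine representation. These are available for $\SL(2,\cdot)$ away from characteristic $2$, and your reduction to $\SL_2$ (with the finite ambiguity coming from twisting by $\Hom(H,\{\pm1\})$) handles the passage to $\PSL_2$, so the proof can be completed --- but it leans on nontrivial facts about character schemes in positive characteristic that you cite rather than establish. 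The paper sidesteps the character scheme mod $p$ entirely: it fixes a single representation $\rho$ whose character is a \emph{generic} point of $Y$, observes that the ring $R$ generated by its traces has fraction field of positive transcendence degree over $\Q$, realises the preimage $H_1$ inside $\SL(2,S)$ for a finitely generated $\Z$-algebra $S$ arranged so that the $S$-span of $H_1$ is all of $\mathrm{M}(2,S)$, and then specialises: every ring homomorphism $S\to\overline{\F_p}$ yields an irreducible representation, $\mathrm{Hom}(S,\overline{\F_p})$ is infinite for almost all $p$ by Noether normalisation over $\Z$, and $\mathrm{Hom}(S,\overline{\F_p})\to\mathrm{Hom}(R,\overline{\F_p})$ is finite-to-one, so infinitely many distinct characters arise. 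Your approach buys geometric transparency and works directly with $Y$; the paper's buys self-containedness, needing only commutative algebra of finitely generated $\Z$-algebras and never confronting lifting or base-change questions for character varieties in characteristic $p$. Your closing paragraph on how the lemma combines with Lemmas \ref{finite_reps_incharp} and \ref{algebraicpointoflargedegree} to yield Theorem \ref{t:character-var} matches the paper's architecture.
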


\noindent Some of the ideas used below were motivated by the proofs in \cite[\textsection 8]{Wei}.

\begin{proof}
First, as $Y$ contains the character of an irreducible representation, any representation corresponding to the character of a generic point must also be irreducible. We now fix a representation $\rho\colon H \to \PSL(2,\C)$ whose character is a generic point in $Y$. Let $H_0 = \rho(H)$ be its image, let $H_1 = \phi^{-1}(H_0)$ be the preimage in $\SL(2,\C)$, let $R$ be the subring of $\C$ generated by the $\Z$--span of the traces of $H_1$, and let $K$ be the fraction field of $R$. As $\rho$ is a generic point of the positive dimensional component, $K/\Q$ has positive transcendence degree.

As described in \S \ref{ss:traces}, since $H_1$ is an irreducible subgroup of $\SL(2,\C)$, the $K$--span of $H_1$ is a $K$--quaternion algebra $B/K$ with $B  < \mathrm{M}(2,\C)$. Fixing a quadratic extension $L/K$ which splits $B$ (if $B$ is already split, take $L = K$), we have that $B \otimes_{K} L \cong \mathrm{M}(2,L)$ and $H_1 < \SL(2,L)$. Setting $S$ to be the subring of $L$ generated over $\Z$ by the matrix entries of $H_1$, we see that $H_1 < \SL(2,S)$, $R$ is a subring of $S$, and the field of fractions of $S$ is $L$. As $H_1$ generates $\mathrm{M}(2,L)$, each of the standard basis elements for $\mathrm{M}(2,L)$ can be written as some finite $L$--linear combination of elements in $H_1$. Let $b \in S$ be the product of all the denominators of the elements of $L$ appearing in this linear combination and redefine $S$ by adding $1/b$ to it, so that now, by construction, we have the $S$--span of $H_1$ is $\mathrm{M}(2,S)$. Since $H_1$ is finitely generated, we see that $S$ is finitely generated as an algebra over $\Z$. Finally, since $L$ contains $K$, we see that $L/\Q$ has positive transcendence degree. 

For a given prime $p$, let $\overline{\mathbb{F}_p}$ be the algebraic closure of $\mathbb{F}_p$. Any (non-trivial) ring morphism $\psi\colon S \to \overline{\mathbb{F}_p}$ induces a morphism $\mathrm{M}(2,S) \to \mathrm{M}(2,\psi(S))$ of $S$--modules and  a group homomorphism $\SL(2,S) \to \SL(2,\overline{\mathbb{F}_p})$. As $H_1$ generates $\mathrm{M}(2,S)$ as an $S$--module, it follows that $\psi(H_1)$ generates $\mathrm{M}(2,\psi(S))$ as a $\psi(S)$--module. As $\mathrm{M}(2,\psi(S))$ generates $\mathrm{M}(2,\overline{\mathbb{F}_p})$ over $\overline{\mathbb{F}_p}$, we see that the $\overline{\mathbb{F}_p}$--span of $\psi(H_1)$ is $\mathrm{M}(2,\overline{\mathbb{F}_p})$. Consequently, the induced representation $H_1 \to \SL(2,\overline{\mathbb{F}_p})$ is irreducible. Finally, as $R$ is generated by the traces of $H_1$, if two homomorphisms of $S$ to $\overline{\mathbb{F}_p}$ do not agree on $R$, their induced representations of $H_1$ in $\SL(2,\overline{\mathbb{F}_p})$ will have distinct characters and cannot be conjugate. 

We now will prove that for all but finitely many $p$, the inclusion of $R$ in $S$ induces a function $\mathrm{Hom}(S,\overline{\mathbb{F}_p}) \to \mathrm{Hom}(R,\overline{\mathbb{F}_p})$ with infinite image, which will imply that $H_1$ has infinitely many conjugacy classes of irreducible representations in $\SL(2,\overline{\mathbb{F}_p})$. We will first prove that for all but finitely many primes $\mathrm{Hom}(S,\overline{\mathbb{F}_p})$ is infinite. We will complete the proof by proving that $\mathrm{Hom}(S,\overline{\mathbb{F}_p}) \to \mathrm{Hom}(R,\overline{\mathbb{F}_p})$ is finite-to-one.

We start by proving that $\mathrm{Hom}(S,\overline{\mathbb{F}_p})$ is infinite for all but finitely many primes. As $S$ is finitely generated over $\Z$ and the fraction field of $S$ has positive transcendence degree $r>0$, there exist $y_1,\dots,y_r \in S$ such that the ring morphism $\Z[x_1,\dots,x_r] \to S$ induced by sending $x_i$ to $y_i$ is injective and there exists $n \in \Z$ such that $S[1/n]$ is a finite ring extension of $\Z[y_1,\dots,y_r][1/n]$ \cite[\href{https://stacks.math.columbia.edu/tag/07NA}{Tag 07NA}]{stacks-project}. In particular, $S[1/n]$ is finitely generated as a module over $\Z[y_1,\dots,y_r][1/n]$. 

If $p$ does not divide $n$, then the morphism $\psi \colon \Z[y_1,\dots,y_r] \to \overline{\mathbb{F}_p}$ extends uniquely to a morphism of $\Z[y_1,\dots,y_r][1/n] \to \overline{\mathbb{F}_p}$. We can tensor the ring homomorphism $\Z[y_1,\dots,y_r][1/n] \to S[1/n]$ with $\overline{\mathbb{F}_p}$ along $\psi$ to get an inclusion $\overline{\mathbb{F}_p} \to S[1/n] \otimes \overline{\mathbb{F}_p}$ which gives $S[1/n] \otimes \overline{\mathbb{F}_p}$ as a finite dimensional algebra over $\overline{\mathbb{F}_p}$. As $\overline{\mathbb{F}_p}$ is algebraically closed, $S[1/n] \otimes \overline{\mathbb{F}_p} \cong \prod \overline{\mathbb{F}_p}$ is the product of finitely many copies of $\overline{\mathbb{F}_p}$. Finally, we can project onto one of these factors to get a homomorphism $S \to S[1/n] \to S[1/n] \otimes \overline{\mathbb{F}_p} \to \overline{\mathbb{F}_p}$ which restricts to $\psi$ on $\Z[y_1,\dots,y_r]$. Thus we have shown that each homomorphism of $\Z[y_1,\dots,y_r]$ to $\overline{\mathbb{F}_p}$ extends to at least one ring homomorphism of $S$ to $\overline{\mathbb{F}_p}$. In particular, $\mathrm{Hom}(S,\overline{\mathbb{F}_p})$ is infinite.

We complete the proof by showing the induced function $\mathrm{Hom}(S,\overline{\mathbb{F}_p}) \to \mathrm{Hom}(R,\overline{\mathbb{F}_p})$ is finite-to-one. As $S$ is finitely generated over $\Z$, the inclusion of integral domains $R \hookrightarrow S$ makes $S$ a finitely generated $R$--algebra. So there exist $s_1,\dots,s_m \in S$ such that $S = R[s_1,\dots,s_m]$, and because the extension of fraction fields $L/K$ is finite, each $s_i$ is algebraic over $R$. Thus, if $F$ is any field and $\phi\colon R \to F$ is a ring homomorphism, there are at most finitely many distinct homomorphisms $\phi'\colon S \to F$ which restrict to $R$ to give $\phi$. Hence, the inclusion of $R$ in $S$ induces a function $\mathrm{Hom}(S,F) \to \mathrm{Hom}(R,F)$ which is finite-to-one for any field $F$.
\end{proof}

Although Lemma \ref{pos_dim_gives_infinite_reps} provides $H$ with infinitely many finite quotients that cannot be quotients of a group with Property FA as in Theorem \ref{t:character-var}, they are not explicit. This can be remedied by a more careful analysis of specializations at algebraic points together with Strong Approximation \cite{Wei}. In particular, the finite quotients in question can be taken to be groups of the form $\PSL(2,F)$ for some finite fields $F$, or finite groups $D$ that contain such a $\PSL(2,F)$ as normal subgroups with quotients an elementary abelian $2$--group.




\begin{thebibliography}{9999}

\bibitem{Bass} 
H.~Bass, {\em Groups of integral representation type},  Pacific J. Math. {\bf 86} (1980), 15--51.

\bibitem{BMP} 
M.~Boileau, S.~Maillot and J.~Porti, {\em Three-dimensional orbifolds and their geometric structures}, Panoramas et Synth\`eses, {\bf 15} Soci\'et\'e Math\'ematique de France, (2003).

\bibitem{BZ} 
S.~Boyer and X.~Zhang, {\em On Culler-Shalen seminorms and Dehn filling}, Ann. of Math. {\bf 148} (1998), 737--801.

\bibitem{BCR} 
M.~R.~Bridson, M.~Conder and A.~W.~Reid, {\em Determining Fuchsian groups by their finite quotients},  Israel J. Math. {\bf 214} (2016), 1--41.

\bibitem{BMRS} 
M.~R.~Bridson, D.~B.~McReynolds, A.~W.~Reid, and R.~Spitler, {\em Absolute profinite rigidity and hyperbolic geometry}, \url{https://arxiv.org/abs/1811.04394}.

\bibitem{Ke} 
L.~Keen, {\em Intrinsic moduli on Riemann surfaces},  Ann. of Math. {\bf 84} (1966), 404--420.

\bibitem{Ker} 
S.~P.~Kerckhoff, {\em The Nielsen realization problem}, Ann. of Math. {\bf 117} (1983), 235--265.

\bibitem{MR} 
C.~Maclachlan and A.~W.~Reid, {\em The Arithmetic of Hyperbolic 3-Manifolds}, Springer-Verlag (2003).

\bibitem{RZ} 
L.~Ribes and P.A.~Zalesskii, {\em Profinite Groups}, Springer-Verlag (2000).

\bibitem{Serre} 
J-P.~Serre, {\em Trees}, Springer-Verlag (1980).

\bibitem{stacks-project} 
{\em The Stacks Project}, \url{https://stacks.math.columbia.edu}, (2020).

\bibitem{Tak} 
K.~Takeuchi, {\em Commensurability classes of arithmetic triangle groups}, J. Fac. Sci. Univ. Tokyo {\bf 24} (1977), 201--212.

\bibitem{Wei} 
B.~Weisfeiler, {\em Strong approximation for Zariski dense subgroups of semi-simple algebraic groups}, Ann. of Math. {\bf 120} (1984), 271--315.

\end{thebibliography}
\end{document}